\documentclass[12pt,a4paper]{amsart}
\textwidth=6in
\textheight=9in
\hoffset=-0.375in
\voffset=-0.75in

\usepackage{enumerate}
\usepackage{graphicx}
\usepackage{yhmath}
\usepackage{amsmath,verbatim}
\usepackage{amssymb}
\usepackage{pgf,tikz}
\usetikzlibrary{arrows}
\usepackage[utf8]{inputenc}

\usepackage{hyperref}
\usepackage{cleveref}
\usepackage{physics}

\theoremstyle{plain}
\newtheorem{thm}{Theorem}[section]
\newtheorem{mthm}{Theorem}
\newtheorem{lemma}[thm]{Lemma}

\newtheorem{prop}[thm]{Proposition}
\newtoks\prt

\theoremstyle{definition}

\newtheorem*{ex*}{Example}

\def\eqn#1$$#2$${\begin{equation}\label#1#2\end{equation}}

\numberwithin{equation}{section}

\headheight=12pt
\def\diam{\operatorname{diam}}

\def\L{\mathcal{L}}

\def\H{\mathcal{H}}
\def\phi{\varphi}

\def\epsilon{\varepsilon}

\def\er{\mathbb R}

\newcommand{\lip}{{\mathrm {lip}}}

\newcommand{\diff}{{\mathrm{d}}}
\newcommand{\DIFF}{{\mathrm{D}}}

\newcommand{\per}{{\mathrm {Per}}}

\newcommand{\mres}{\mathbin{\vrule height 1.6ex depth 0pt width 0.13ex\vrule height 0.13ex depth 0pt width 1.3ex}}

\newcommand{\dist}{{\mathsf{d}}}
\newcommand{\mass}{{\mathsf{m}}}

\newcommand{\XX}{{\mathsf{X}}}
\newcommand{\YY}{{\mathsf{Y}}}

\newcommand{\defeq}{\mathrel{\mathop:}=}

\newcommand{\HH}{\mathcal{H}}
\newcommand{\LL}{\mathcal{L}}
\newcommand{\RR}{\mathbb{R}}
\newcommand{\NN}{\mathbb{N}}

\newcommand{\Lp}{{\mathrm {L}}}
\newcommand{\Lploc}{\mathrm {L}_\mathrm{loc}}

\newcommand{\LIP}{{\mathrm {LIP}}}

\newcommand{\BV}{{\mathrm {BV}}}
\newcommand{\W}{\mathrm {W}^{1,1}}
\newcommand{\BVloc}{{\mathrm {BV_{loc}}}}
\newcommand{\Wloc}{\mathrm {W_{loc}^{1,1}}}

\newcommand{\LIPloc}{{\mathrm {LIP_{loc}}}}

\newcommand{\PI}{{\mathrm {PI}}}

\def\Xint#1{\mathchoice
	{\XXint\displaystyle\textstyle{#1}}%
	{\XXint\textstyle\scriptstyle{#1}}%
	{\XXint\scriptstyle\scriptscriptstyle{#1}}%
	{\XXint\scriptscriptstyle\scriptscriptstyle{#1}}%
	\!\int}
\def\XXint#1#2#3{{\setbox0=\hbox{$#1{#2#3}{\int}$}
		\vcenter{\hbox{$#2#3$}}\kern-.5\wd0}}

\def\dashint{\Xint-}


\MakeRobust{\ref}

\makeatletter
\newcommand{\labeltext}[2]{%
	\@bsphack
	\def\@currentlabel{#1}{\label{#2}}%
	\@esphack
}
\makeatother

\begin{document}
	
	\title[BV and Sobolev homeomorphisms]{BV and Sobolev homeomorphisms between \\metric measure spaces and the plane}
	
	\author[C. Brena]{Camillo Brena}
	\address{C.~Brena: Scuola Normale Superiore, Piazza dei Cavalieri 7, 56126 Pisa, Italy} 
	\email{\tt camillo.brena@sns.it}
	
	\author[D. Campbell]{Daniel Campbell}
	\address{D.~Campbell: Department of Mathematics, University of Hradec Kr\' alov\' e, Rokitansk\'eho 62, 500 03 Hradec Kr\'alov\'e, Czech Republic} 
	\address{Faculty of Economics, University of South Bohemia, Studentsk\' a 13, Cesk\' e Budejovice, Czech Republic}
	\email{\tt daniel.campbell@uhk.cz}

	\thanks{The second author was supported by the grant GACR 20-19018Y}

	\subjclass[2010]{Primary 26B30, Secondary 30L99}
	\keywords{metric measure space, mapping of bounded variation, homeomorphism of bounded variation, Jordan curve}
	
	\begin{abstract}
	We show that given a homeomorphism $f:G\rightarrow\Omega$ where $G$ is a open subset of $\RR^2$ and $\Omega$ is a open subset of a $2$-Ahlfors regular metric measure space supporting a weak $(1,1)$-Poincaré inequality, it holds $f\in\BVloc(G,\Omega)$ if and only $f^{-1}\in\BVloc(\Omega,G)$. Further if $f$ satisfies the Luzin N and N$^{-1}$ conditions then $f\in \Wloc(G,\Omega)$ if and only if $f^{-1}\in \Wloc(\Omega,G)$.

	\end{abstract}
	
	\maketitle
	
	\section{Introduction}
	In 2007, \cite{HeKoOnHomeoBV}, Hencl, Koskela and Onninen proved that a planar homeomorphism is in BV if and only if its inverse is BV with the variation of the inverse bounded by a constant multiple of the variation of the map. This result was enhanced by D'Onofrio and Schiattarella \cite{donoschi} in 2013 by  including an equality between variations of the map and variations of its inverse.
	
	The study of BV maps on metric measure spaces dates back to the seminal paper \cite{MIRANDA2003} (see also \cite{amb00,amb01}). A natural extension of the result in \cite{HeKoOnHomeoBV} is to ask whether the result still holds when we replace the target space or the domain with a metric measure space. In general we cannot expect to achieve the D'Onofrio, Schiattarella type estimate because it depends on a standard choice of metric and measure on the spaces involved. To see this it suffices to consider the identity map from the Euclidean space with the Lebesgue measure to the Euclidean space with the Lebesgue measure multiplied by a constant density not equalling 1. In particular, the form of Theorem \ref{mainimproved} below can not be improved.
	
	Further motivation for our study arises from an attempt to extend novel techniques for approximating homeomorphisms with diffeomorphisms (see \cite{IKO,HP,DPP}) in the plane to the $\er^3$ context. A core component of each of these methods is the use of `grids', i.e.\ the separation of the plane (either in the image or preimage or both) into rectangular parts. Typically, this is done in such a way that the restriction of the map (or its inverse) to the boundaries of the rectangles belongs to the corresponding Sobolev or BV space. Hence we want to know whether the restriction of a Sobolev (or BV) homeomorphism from $\er^3$ to $\er^3$ onto (almost any) hyperplane has BV inverse. We show that if the image of the plane is a sufficiently regular metric measure space, then the claim is true. Our main result is as follows:

	\begin{mthm}\label{main}
		Let $(\XX,\dist,\mass)$ be a $2$-Ahlfors regular metric measure space supporting a weak $(1,1)$-Poincaré inequality. Let moreover $G\subseteq\RR^2$ open, $\Omega\subseteq\XX$ open and $f:G\rightarrow\Omega$ a homeomorphism. Then $f\in\BVloc(G,\Omega)$ if and only if $f^{-1}\in \BVloc(\Omega,G)$.
	\end{mthm}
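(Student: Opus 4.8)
The plan is to prove the two implications symmetrically, so I will describe how to show that $f\in\BVloc(G,\Omega)$ implies $f^{-1}\in\BVloc(\Omega,G)$; the reverse is analogous with the roles of the plane and $\XX$ exchanged (using that $\RR^2$ is itself $2$-Ahlfors regular and supports a $(1,1)$-Poincar\'e inequality). The key geometric input is that BV functions on a $2$-Ahlfors regular $\PI$ space admit a good coarea/slicing theory: sets of finite perimeter have a rectifiable-type boundary structure, and for a BV \emph{homeomorphism} the relevant objects are the images of coordinate segments. So the first step is to reduce to a local statement on a small ball (or coordinate cube) $Q\ss G$, where by a covering argument it suffices to bound the total variation of $f^{-1}$ on $f(Q)$ by a constant times the total variation of $f$ on a slightly larger cube, with the constant depending only on the Ahlfors and Poincar\'e data.

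Second, I would set up the two natural ``length'' quantities. For the map $f\colon G\to\Omega$ I use that $f\in\BVloc$ means the components (or rather the metric-valued BV norm) have finite variation; by the coarea formula applied to the two coordinate directions in the plane, for a.e.\ horizontal and vertical line $\ell$ the restriction $f|_\ell$ is a curve of finite length in $\XX$, and the integral of these lengths over the family of lines is controlled by $\|Df\|(Q)$. Symmetrically, $f^{-1}\in\BVloc(\Omega,G)$ should be detected by showing that, for a suitable one-parameter family of curves in $\Omega$ foliating $f(Q)$, the $f^{-1}$-images have finite length with integrable total. The bridge between the two is the Jordan curve theorem in the plane together with a topological monotonicity/degree argument: the image under $f$ of the boundary of a small square $R\subseteq Q$ is a Jordan curve $\Gamma_R\subseteq\Omega$, it bounds a Jordan domain, and $f$ maps the interior of $R$ onto the interior of $\Gamma_R$ by invariance of domain. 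Then the ``width'' of $f(R)$ in $\XX$, measured via the perimeter or via diameters of slices, controls how much $f^{-1}$ can stretch, i.e.\ $\mathrm{diam}_{\RR^2}(R)$ is estimated by a constant times $\mathcal{H}^1$-content of $\Gamma_R$ divided by the mass of $f(R)$, by the relative isoperimetric inequality available in $\PI$ spaces.

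Third, the core estimate: I would fix a mesh of dyadic squares in $Q$, and for the variation of $f^{-1}$ along, say, horizontal segments in $\Omega$, I would use the fact that such a segment (or a curve from the Poincar\'e test-curve family) crosses the images $f(R_i)$ of the mesh squares, and the contribution to the length of $f^{-1}$ over one crossing is comparable to the Euclidean diameter of $R_i$. Summing over $i$ and integrating over the curve family, the relative isoperimetric inequality on $\XX$ converts $\sum \mathrm{diam}(R_i)$-type sums into $\sum \mathrm{Per}(f(R_i))$-type sums, which telescope/subadditively bound $\|Df\|(Q')$ up to the Ahlfors-regularity constant. Letting the mesh size go to zero and invoking lower semicontinuity of the metric BV norm yields $\|Df^{-1}\|(f(Q))\le C\,\|Df\|(Q')$, and summing over a cover of $G$ gives $f^{-1}\in\BVloc(\Omega,G)$.

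The main obstacle I expect is the slicing/coarea argument in the metric target combined with the topology: on $\RR^2$ the classical proof of \cite{HeKoOnHomeoBV} uses that the inverse of a planar homeomorphism has, along almost every line, bounded variation because that line's preimage is a monotone-type curve and the change-of-variables is governed by the area formula; in the metric setting one does not have coordinates on $\XX$, so the ``lines in the target'' must be replaced by the Poincar\'e inequality's test curves or by level sets of Lipschitz functions, and one must show these foliate $\Omega$ in a way compatible with the Jordan-domain structure of $f(R)$ and carry a coarea inequality. Making the relative isoperimetric inequality interact correctly with a homeomorphic image of a square — ensuring that $f(R)$ really is (essentially) a set of finite perimeter with perimeter controlled by the variation of $f$ on $\partial R$, uniformly as the squares shrink — is the delicate technical heart, and I would expect the proof to spend most of its effort there, likely via a careful approximation of $f$ by piecewise-affine or Lipschitz maps on the mesh skeleton and a compactness argument for metric-valued BV maps.
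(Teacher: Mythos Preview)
Your outline has the right cast of characters (slicing, Jordan domains, coarea) but the overall architecture is off, and the claim that the two implications are ``symmetric with the roles of the plane and $\XX$ exchanged'' is precisely where the argument breaks.

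The direction $f\in\BVloc(G)\Rightarrow f^{-1}\in\BVloc(\Omega)$ is the easy one and needs only the upper $2$-Ahlfors bound, not the Poincar\'e inequality. Write $f^{-1}=(g_1,g_2)$; by coarea on $\XX$ it suffices to bound $\int_\RR\per(\{g_1>t\},\Omega)\,dt$. Since $f$ is a homeomorphism, $\partial\{g_1>t\}\cap\Omega=f_t(G_t)$ is the image of a vertical segment. One proves $\per(\{g_1>t\},\Omega)\le C\,\HH^1(f_t(G_t))$ directly: cover the curve by balls of radius $r$, use the upper Ahlfors bound to get $\mass(B_r(f_t(G_t)))\le C r\,\HH^1(f_t(G_t))$, and test against an explicit Lipschitz cutoff of the distance function (this is Proposition~\ref{homeoweak}). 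The slicing inequality \eqref{slicingeq} then gives $\int\HH^1(f_t(G_t))\,dt\le\abs{\DIFF f}(G)$, and this direction is done. No dyadic mesh, no isoperimetric inequality, no test-curve families in $\XX$ are needed.

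The reverse direction $f^{-1}\in\BVloc(\Omega)\Rightarrow f\in\BVloc(G)$ is \emph{not} obtained by swapping roles: $\XX$ has no coordinate lines to slice along, and there is no canonical foliation of $\Omega$ by curves against which to test the variation of $f^{-1}$. The paper instead still slices $f$ in the plane and must now establish the \emph{opposite} inequality, namely $\HH^1(f(\gamma_{x_0}))\le C\,\per(f(R_{x_0}),\Omega)$ for $\gamma_{x_0}=\partial R_{x_0}$ a rectangle boundary. The right-hand side is finite for a.e.\ choice of sides because $f(R_{x_0})=\{x_0<g_1<x_1,\ y_0<g_2<y_1\}$ and $g_1,g_2\in\BV(\Omega)$ by hypothesis, so coarea applies. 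The genuine technical heart---which your sketch does not isolate---is this bound on the $\HH^1$-length of a Jordan curve in $\XX$ by the perimeter of the region it encloses (Proposition~\ref{Jordanmain}). Its proof uses discrete convolutions of $\chi_{f(R)}$, a Lipschitz coarea estimate \eqref{LipEsteq2} to find level sets of small $\HH^1$-measure, a topological separation argument (Theorem~\ref{separates}) to extract a connected level-set component that still separates the two sides of $\gamma$, and Go\l\k{a}b's semicontinuity theorem to pass to the limit. The relative isoperimetric inequality by itself does not deliver this, and your proposed estimate $\mathrm{diam}(R)\lesssim\HH^1(\Gamma_R)/\mass(f(R))$ both points in the wrong direction (in this implication $\abs{\DIFF f^{-1}}$ is the datum, not $\abs{\DIFF f}$) and is dimensionally incoherent.
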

Further we prove, as a corollary, the following
	\begin{mthm}\label{mainplus}
	Let $(\XX,\dist,\mass)$ be a $2$-Ahlfors regular metric measure space supporting a weak $(1,1)$-Poincaré inequality. Let moreover $G\subseteq\RR^2$ open, $\Omega\subseteq\XX$ open and $f:G\rightarrow\Omega$ a homeomorphism such that 
\begin{align}\label{Luz}\tag{N}
&\mass(f(N))=0&&\text{if $N\subseteq G$ is a Borel set with }\LL^2(N)=0,
\\
\label{Luz1}\tag{N$^{-1}$}
&\LL^2(f^{-1}(N))=0&&\text{if $N\subseteq \Omega$ is a Borel set with }\mass(N)=0.
\end{align}
Then $f\in\Wloc(G,\Omega)$ if and only if $f^{-1}\in \Wloc(\Omega,G)$.
\end{mthm}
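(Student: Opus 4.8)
The plan is to deduce Theorem~\ref{mainplus} from Theorem~\ref{main} together with two soft facts. The first is that $\Wloc\subseteq\BVloc$ in each configuration we need. The second is that a $\BVloc$ map lies in $\Wloc$ exactly when its total variation measure is absolutely continuous with respect to the ambient reference measure --- $\LL^2$ for a map defined on $G$, and $\mass$ for a map defined on $\Omega$. This second fact is classical for real-valued $\BV$ functions on a $\PI$ space and transfers to our maps: for $f^{-1}=(f^{-1}_1,f^{-1}_2)$ one argues componentwise using $|Df^{-1}|\asymp|Df^{-1}_1|+|Df^{-1}_2|$, and for $f:G\to\XX$ one uses that $\dist(f(\cdot),z)\in\BV(G)$ with $|D\dist(f(\cdot),z)|\le|Df|$ for every $z\in\XX$, so that once $|Df|\ll\LL^2$ its density is a common upper gradient. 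After these reductions the theorem comes down to the two implications (i) $f\in\Wloc\Rightarrow|Df^{-1}|\ll\mass$ and (ii) $f^{-1}\in\Wloc\Rightarrow|Df|\ll\LL^2$; in both, Theorem~\ref{main} already tells us the relevant map is $\BVloc$, so only the absolute continuity of a genuine locally finite Radon measure is at stake.

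The crucial input is the localized, quantitative form of Theorem~\ref{main} which its proof delivers: the comparison
\begin{equation}\label{eq:keyest}
	|Df^{-1}|(f(A))\le C\,|Df|(A)\qquad\text{and}\qquad |Df|(f^{-1}(B))\le C\,|Df^{-1}|(B)
\end{equation}
for all Borel sets $A\subseteq G$ and $B\subseteq\Omega$, whenever $f$ is a $\BVloc$ homeomorphism, with $C$ depending only on the structural data of $(\XX,\dist,\mass)$. The two inequalities in \eqref{eq:keyest} are the quantitative content of the two directions of Theorem~\ref{main} (and the second is just the first read with $f^{-1}$ in place of $f$); no Luzin hypothesis enters here, since Theorem~\ref{main} itself has none --- the Luzin assumptions of Theorem~\ref{mainplus} will serve only to transport null sets across $f$ and $f^{-1}$. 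To use \eqref{eq:keyest} for arbitrary Borel test sets one combines outer regularity of $|Df|$ and $|Df^{-1}|$ with the openness of $f$ and $f^{-1}$, upgrading the estimate from open to Borel sets, and one localizes to $\Omega'\Subset\Omega$ and $f^{-1}(\Omega')\Subset G$ to stay within $\BVloc$.

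Granting \eqref{eq:keyest}, both implications close in a line. For (i): let $E\subseteq\Omega$ be Borel with $\mass(E)=0$; by \eqref{Luz1} we get $\LL^2(f^{-1}(E))=0$, hence $|Df|(f^{-1}(E))=0$ because $f\in\Wloc$, and then the first half of \eqref{eq:keyest} with $A=f^{-1}(E)$ (so $f(A)=E$) gives $|Df^{-1}|(E)=0$; thus $|Df^{-1}|\ll\mass$ and $f^{-1}\in\Wloc$. For (ii): let $N\subseteq G$ be Borel with $\LL^2(N)=0$; by \eqref{Luz} we get $\mass(f(N))=0$, hence $|Df^{-1}|(f(N))=0$ because $f^{-1}\in\Wloc$, and then the second half of \eqref{eq:keyest} with $B=f(N)$ (so $f^{-1}(B)=N$) gives $|Df|(N)=0$; thus $|Df|\ll\LL^2$ and $f\in\Wloc$.

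The main obstacle is therefore \eqref{eq:keyest}: the global comparison of total variations is what Theorem~\ref{main} is about, but here one needs it as a comparison of \emph{measures} with a purely structural constant, and one must check that the open-to-Borel passage and the localization go through; this is where the real work sits. If one wishes to bypass \eqref{eq:keyest}, implication (i) has a self-contained proof: since $f^{-1}\in\BVloc$, the coarea formula gives $|Df^{-1}_i|(E)=\int_\RR P(\{f^{-1}_i>t\},E)\,dt$; the superlevel set $\{f^{-1}_i>t\}$ is the $f$-image of a coordinate half-plane meeting $G$, so its essential boundary lies in $f(\ell_t)$ for a segment $\ell_t\subseteq\{x_i=t\}$, and Ambrosio's identification of perimeter with codimension-one Hausdorff measure in Ahlfors $2$-regular $\PI$ spaces yields $P(\{f^{-1}_i>t\},E)\le C\,\HH^1\bigl(f(\ell_t\cap f^{-1}(E))\bigr)\le C\int_{\ell_t\cap f^{-1}(E)}|\partial_i f|\,d\HH^1$, using that $f\in\Wloc$ is absolutely continuous on $\LL^2$-a.e.\ coordinate line; integrating in $t$ gives $|Df^{-1}_i|(E)\le C\int_{f^{-1}(E)}|\partial_i f|\,d\LL^2=0$ by \eqref{Luz1} and Fubini. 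Implication (ii), however, seems genuinely to need \eqref{eq:keyest} (together with \eqref{Luz}), so routing both directions through \eqref{eq:keyest} is the cleaner path.
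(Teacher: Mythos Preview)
Your proposal is correct and follows essentially the same route as the paper: the key input you call \eqref{eq:keyest} is precisely the content of Theorem~\ref{mainimproved} (applied to open subsets, with the structural constant independent of the domain), and the paper's proof then deduces $\abs{\DIFF f}\ll\LL^2\Leftrightarrow\abs{\DIFF f^{-1}}\ll\mass$ from this together with \eqref{Luz} and \eqref{Luz1} exactly as you do. Your write-up is more explicit about the outer-regularity passage from open to Borel test sets and about the localization, and your alternative slicing argument for direction (i) is extra material not in the paper.
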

It is common to refer to \eqref{Luz} as the Luzin N condition satisfied by $f$, whereas  \eqref{Luz1} states the Luzin N condition satisfied by $f^{-1}$, or, in other words, the Luzin N$^{-1}$ condition satisfied by $f$.
	

	\subsection{Overview of the approach of the proof}
	To prove the fact that a homeomorphism between open planar sets has locally bounded variation if and only its inverse has locally bounded variation, in \cite{HeKoOnHomeoBV} \cite{donoschi} different strategies were used. In particular, it is not clear how to adapt the approximation strategy used in the former to our context. On the other hand, the ``slicing'' strategy employed by the latter can be adapted, which allows us to prove that a BV homeomorphism from the plane into a metric measure space has BV inverse. Of course, when the original and target space are both $\er^2$ this is enough to conclude, but this is not our case. We therefore have to find a way to ``reverse'' the technique.
	
	The main obstacle to the approach is that we have to recover a quantitative $\HH^1$ bound for the boundaries of particular sets of finite perimeter in terms of their perimeter (the sets of finite perimeter will be the ones enclosed by Jordan curves -- in this way we have a definite interior and exterior to the curve). This will be Lemma \ref{Jordanmain} whose proof, in the context of (regular enough) metric measure spaces faces the difficulty of the lack of a powerful smooth differential structure. Indeed, when we try to approximate the boundary of a set of finite perimeter, the first attempt is to regularize the set and then look at level sets of functions in an approximating sequence. In $\RR^2$ we can choose level sets that have a nice ``piecewise curve'' structure, using a combination of Sard's theorem and the implicit function theorem, but this is definitely not easy in the context of metric measure spaces. We show that such a procedure can be carried out also in our context, where the approximating sequence is obtained through discrete convolutions and a weak ``piecewise curve'' structure is obtained by a purely topological argument.
	
	Of course, we cannot hope to generalize the result of \cite{HeKoOnHomeoBV} unless we add assumptions to the metric measure space. In particular, the class of functions of bounded variation into a metric space depends on the metric, while the class of functions of bounded variation on a metric measure space depends both on the metric and the measure. For this reason, conditions assuring a sufficient compatibility between metric and measure are clearly needed. We add examples showing that without the $2$-Ahlfors regularity the result is false, and we remark here that the weak $(1,1)$-Poincaré inequality is needed only in one direction, to require a deeper compatibility between metric and measure.   
	
	\section{Preliminaries}
	\subsection{General material}
	Besides the Euclidean setting (for which we adopt the usual notation $\dist_e$ for the distance and $\LL^n$ for the $n$ dimensional Lebesgue measure) we will work with metric measure spaces. An metric measure space (m.m.s.\ for short) is a triplet $(\XX,\dist,\mass)$ where $\XX$ is a set, $\dist$ is a complete and separable distance on $\XX$ and $\mass$ is a Borel measure that is finite on balls. 
	We exclude the case in which an m.m.s.\ is a single point and we adopt the convention that an m.m.s.\ has full support, that is to say that for any $x\in\XX$, $r>0$, we have $\mass(B_r(x))>0$. Here $B_r(x)$ denotes the open ball of radius $r$ centred at $x$. We will assume that balls have a definite centre and radius, although there may be more than one couple centre-radius describing the same subset of $\XX$. If $B=B_r(x)$ is ball, by $\alpha B$ we denote $B_{\alpha r}(x)$ whenever $\alpha>0$.
	When relevant we emphasize the dependence on the metric as follows $B_r^\dist(x)$. If $A\subseteq\XX$, by $B_r(A)$ we denote $\bigcup_{x\in A}B_r(x)$. As customary, if $A\subseteq\XX$ we write $\mathrm{dist}(x,A)\defeq\inf_{y\in A}\dist(x,y)$. If $A\subseteq\XX$ is open we write $A'\Subset A$ if $A'$ is a bounded subset of $A$ with $\dist (A',\XX\setminus A)>0$. Clearly, if the space is proper (i.e.\ bounded sets are relatively compact), $A'\Subset A$ if and only if $A'$ has compact closure contained in $A$.  Of course, some of the notions above and several ones in what follows in this introductory section make sense in the framework of metric spaces, not necessarily endowed with a measure. We implicitly extend all these notions to the more general framework.

	If $f$ is a Borel function and $A\subseteq\XX$ is open, we write $$ \dashint_A f\dd{\mass}\defeq f_{A}\defeq\frac{1}{\mass(A)}\int_A f\dd{\mass}$$
	whenever it makes sense (e.g.\ $f\in\Lp^1(A)$).
	
	Given $A\subseteq\XX$ open, we denote with $\LIP(A)$ the space of functions that are $L$-Lipschitz on $A$ for some $L\in(0,\infty)$, that means that $\abs{f(x)-f(y)}\le L\dist(x,y)$ for every $x,y\in A$.
	$\LIPloc(A)$ denotes the space of functions that are Lipschitz in a neighbourhood of $x$ for any $x\in A$. If the space is locally compact, $\LIPloc(A)$ coincides with the space of functions that are Lipschitz on  compact subsets of $A$.
	Given $f\in\LIPloc(A)$, we define on $A$ the Borel function
	$$ \lip(f)(x)\defeq\limsup_{y\rightarrow x} \frac{\abs{f(y)-f(x)}}{\dist(y,x)}$$
	where $\lip(f)(x)$ is understood to be $0$ if $x$ is an isolated point.
	
		An m.m.s.\ $(\XX,\dist,\mass)$ is said to be doubling if there exists a constant $C_D\ge 1$ such that $$ \mass(B_{2 r}(x))\le C_D\mass(B_{r}(x))\quad\text{for every }r>0\text{ and }x\in\XX.$$ 
	It is easy to show that in a doubling m.m.s.\ balls are totally bounded, hence the space is proper.
	A more stringent notion is the one of Ahlfors regularity: more precisely, given $\nu>0$, we say that $(\XX,\dist,\mass)$ is $\nu$-Ahlfors regular if there exist constants $0<C'_H\le C''_H$ such that 
	\begin{equation}\label{Ahlfors}
C'_H r^{\nu} \le\mass(B_r(x))\le C''_H r^{\nu}\quad\text{for every }0<r<\diam(\XX)\text{ and }x\in\XX. 
	\end{equation}

	\subsection{Hausdorff measures}
	Given $\nu\in(0,\infty)$, we define the $\nu$-dimensional Hausdorff measure through the Carathéodory construction (\cite[2.10]{Federer69}) as $$\HH^\nu(B)\defeq\sup_{\delta>0}\HH^\nu_\delta(B),$$ where 
	$$ 
	\HH^\nu_\delta(B)\defeq\frac{\omega_\nu}{2^\nu}\inf\left\{\sum_{i\in\NN} \diam(B_i)^\nu:\diam(B_i)<\delta,\  B\subseteq\bigcup_{i\in\NN} B_i \right\}
	$$
	where $$\omega_\nu\defeq\frac{\pi^{\nu/2}}{\Gamma(\nu/2+1)}$$ and $\Gamma$ is the Euler's gamma function. Notice that $\omega_k=\LL^k(B_1(0))$ in $\RR^k$ if $k\in\NN$, $k\ge 1$.
	
	If $(\XX,\dist,\mass)$ is a doubling m.m.s.\ the natural measure is the $1$-codimensional Hausdorff measure, defined through the Carathéodory construction (\cite[2.10]{Federer69} again) as
	$$\HH^h(B)\defeq\sup_{\delta>0}\HH^h_\delta(B),$$
	where 
	$$ 
	\HH^h_\delta(B)\defeq\inf\left\{\sum_{i\in\NN} h(B_{r_i}(x_i)):r_i<\delta,\  B\subseteq\bigcup_{i\in\NN} B_{r_i}(x_i) \right\}
	$$
	where 
	$$h(B_{r_i}(x_i))\defeq\frac{\mass(B_{r_i}(x_i))}{r_i}.$$
	Recall that \cite[2.10]{Federer69} implies that the two classes of measures defined above are Borel measures.
	
		A crucial property that we are going to exploit in the derivation of the main result is that if $(\XX,\dist,\mass)$ is $2$-Ahlfors regular, then the measures $\HH^h$ and $\HH^1$ are comparable, in the sense that there exists $C>0$ such that for every $B\subseteq\XX$ Borel 
	\begin{equation}\label{comaprehauss}
		C^{-1}\HH^h(B)\le \HH^1(B)\le C\HH^h(B).
	\end{equation}

	\subsection{Curves}
A curve $\gamma$ is a continuous (possibly non injective) map $\gamma:[0,l]\rightarrow\XX$, where $l>0$. A Jordan curve is a curve $\gamma$ such that $$x<y\text{ and }\gamma(x)=\gamma(y)\quad\text {if and only if}\quad x=0\text{ and }y=l.$$
	In other words, a Jordan curve is an embedding ${S}^1\rightarrow\XX$, up to reparametrizations. We will often denote, with a slight abuse, with $\gamma$ both the curve and the image of the curve (its support).
	Given a curve $\gamma$, we define its length as 
\begin{equation}\label{defnlen}
L(\gamma)\defeq\sup\left\{\sum_{i=1}^N \dist(\gamma_{t_i},\gamma_{t_{i-1}}):0\le t_0\le t_1\dots,t_N\le l\right\} 
\end{equation}
	and by \cite[Theorem 2.10.13]{Federer69}  the following formula holds $$L(\gamma)=\int_\XX N(\gamma|[0,l], x) \, \dd \H^1(x)
	$$
	where $N(\gamma|A, x) \defeq \H^0(\gamma^{-1}(x)\cap A)$. In particular, if $\gamma$ is either injective or a Jordan curve, then 
	\begin{equation}\label{leneH}
L(\gamma)=\HH^1(\gamma).
	\end{equation}
If $L(\gamma)<\infty$ we say that $\gamma$ is rectifiable and, if this is the case, following e.g.\ \cite[2.5.16]{Federer69} we can parametrize $\gamma$ by arc-length, $\gamma:[0,L(\gamma)]\rightarrow\XX$.

 We say that a curve $\gamma$ is absolutely continuous and we write $\gamma\in\mathrm{AC}([0,l], \XX)$ if there exists $g\in\Lp^1([0,l])$ such that $$\dist(\gamma_t,\gamma_s)\le\int_s^t g(r)\dd{r} \quad\text{for every }0\le s\le t\le l $$
and we define also $\abs{\dot{\gamma_t}}$, the metric speed of $\gamma$, as the minimal (in the $\LL^1$-a.e.\ sense) function for which the above inequality holds. Moreover it holds that (\cite[Theorem 1.1.2]{AmbrosioGigliSavare08})
$$ \abs{\dot{\gamma_t}}=\lim_{s\rightarrow t}\frac{\dist(\gamma_s,\gamma_t)}{\abs{s-t}}\quad \text{for $\LL^1$-a.e.\ }t.$$

We say that a metric space $(\XX,\dist)$ is quasi-convex if the there exists a constant $C_Q>0$ such that for every $x,y\in\XX$ there exists a curve $\gamma:[0,1]\rightarrow \XX$ with $L(\gamma)\le C_Q\dist(x,y)$ such that $\gamma_0=x$ and $\gamma_1=y$. A quasi-convex space with $C_Q=1$ is called geodesic.

\begin{lemma}[{\cite[Theorem 4.4.7]{AmbrosioTilli04}}]\label{FullCream}
Let $(\XX,\dist)$ be a complete metric space and $C\subseteq\XX$ a closed connected set such that $\HH^1(C)<\infty$. Then $C$ is compact and arcwise connected, in the sense that given $x,y\in C$, we can find an injective curve $\eta:[0,1]\rightarrow C$ with $\eta(0)=x$ and $\eta(1)=y$. 
\end{lemma}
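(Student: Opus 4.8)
The plan is to establish compactness first and then, given distinct $x,y\in C$, to produce the arc joining them by passing to a minimal subcontinuum of $C$ that contains $x$ and $y$ and showing that such a continuum is forced to be an arc.

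\emph{Compactness.} Since $\XX$ is complete and $C$ is closed, $C$ is a complete metric space, so it suffices to show that $C$ is totally bounded. Fix $\epsilon>0$ and let $\{x_1,\dots,x_N\}\subseteq C$ be $\epsilon$-separated; we may assume $N\ge2$. For each $i$ set $u_i\defeq\dist(x_i,\cdot)$, a $1$-Lipschitz function on $C$; then $u_i(C)$ is a connected subset of $[0,\infty)$ containing $0$ and some value $\ge\epsilon$, hence $u_i(C)\supseteq[0,\epsilon]$, and therefore $u_i\big(C\cap B_{\epsilon/2}(x_i)\big)\supseteq[0,\epsilon/2)$. Since $1$-Lipschitz maps do not increase $\HH^1$, and $\HH^1$ restricted to $\RR$ is Lebesgue measure, this gives $\HH^1\big(C\cap B_{\epsilon/2}(x_i)\big)\ge\epsilon/2$; as the balls $B_{\epsilon/2}(x_i)$ are pairwise disjoint we conclude $\HH^1(C)\ge N\epsilon/2$, that is, $N\le2\HH^1(C)/\epsilon$. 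Thus $\epsilon$-separated subsets of $C$ have cardinality bounded independently of the choice, so $C$ is totally bounded, hence compact, and in particular a continuum. The same argument applied to an arbitrary connected $S\subseteq\XX$ (using the $1$-Lipschitz function $\dist(p,\cdot)$, $p\in S$) shows $\HH^1(S)\ge\diam(S)$, a fact we use below.

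\emph{Arcwise connectedness.} Let $x,y\in C$ be distinct. The family of subcontinua of $C$ containing both $x$ and $y$ is nonempty ($C$ itself belongs to it), and the intersection of any decreasing chain of such subcontinua is again one (nonempty by compactness, and connected as a nested intersection of continua); hence, by Zorn's lemma, there is a minimal element $K$: a subcontinuum of $C$ with $x,y\in K$ such that no proper subcontinuum of $K$ contains both points, and $\HH^1(K)\le\HH^1(C)<\infty$. I claim $K$ is locally connected. If it were not, then by the classical structure theory of continua $K$ would contain a continuum of convergence, i.e.\ a nondegenerate subcontinuum $N$ (so $\diam(N)=2\delta>0$) that is the Hausdorff limit of pairwise disjoint subcontinua $N_1,N_2,\dots\subseteq K$; for $j$ large one has $\diam(N_j)>\delta$, hence $\HH^1(N_j)\ge\delta$ by the diameter bound, and disjointness forces $\HH^1(K)\ge\sum_j\HH^1(N_j)=\infty$, a contradiction. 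Thus $K$ is a locally connected continuum, hence arcwise connected (a classical fact); choose an arc $A\subseteq K$ with endpoints $x$ and $y$. Then $A$ is a subcontinuum of $K$ containing both points, so $A=K$ by minimality. Therefore $K$ is an arc, and any homeomorphic parametrization $\eta\colon[0,1]\to K\subseteq C$ with $\eta(0)=x$ and $\eta(1)=y$ is the required injective curve.

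\emph{Main obstacle, and an alternative.} The heart of the matter is the local connectedness of $K$ — the only place where the hypothesis $\HH^1(C)<\infty$ enters the arc statement — which relies on the standard but nontrivial dichotomy that a continuum which is not locally connected must contain a continuum of convergence; the remaining ingredients (Zorn's lemma, and the fact that locally connected continua are arcwise connected) are soft. A more hands-on, purely metric route that avoids continuum theory is: take finite $2^{-n}$-nets $V_n$ of $C$, join points at distance $<3\cdot2^{-n}$ to obtain connected graphs, take spanning trees $T_n$ (whose total length is $O(\HH^1(C))$, since the number of vertices is $O(2^n\HH^1(C))$ by the counting estimate above), traverse each $T_n$ by depth-first search to produce uniformly Lipschitz closed curves through $x$ whose images Hausdorff-converge to $C$, pass to a uniform limit via the Arzel\`a--Ascoli theorem to obtain a Lipschitz surjection $\gamma\colon[0,1]\to C$ with $\gamma(0)=x$, restrict $\gamma$ so as to reach $y$, and reduce the resulting path to an arc by the classical path-to-arc argument; in that approach the delicate point is turning the discrete traversals into genuine curves in (a small neighbourhood of) $C$ and controlling their limit.
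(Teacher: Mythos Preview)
The paper does not prove this lemma at all: it is stated with a citation to \cite[Theorem 4.4.7]{AmbrosioTilli04} and no argument is given. So there is no ``paper's own proof'' to compare against; I can only assess your proposal on its merits and against the cited reference.

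Your argument is correct. The compactness step is the standard one: the $1$-Lipschitz projections $\dist(x_i,\cdot)$ push at least $\epsilon/2$ of $\HH^1$-mass into each of the disjoint balls $B_{\epsilon/2}(x_i)$, bounding the cardinality of any $\epsilon$-separated set. For arcwise connectedness, your route through an irreducible subcontinuum $K$ between $x$ and $y$, followed by the dichotomy ``not locally connected $\Rightarrow$ contains a convergence continuum'', is sound; the pairwise disjoint subcontinua $N_j$ one extracts (as components of $K$ intersected with a small closed ball at a point of non-local-connectedness, via boundary bumping) indeed each carry $\HH^1$-mass at least $\delta$, forcing $\HH^1(K)=\infty$. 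Once $K$ is a Peano continuum it is arcwise connected, and minimality collapses $K$ to an arc.

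It is worth noting that the proof actually given in Ambrosio--Tilli is essentially your ``alternative'': they embed $C$ isometrically into $\ell^\infty$, build connected $\epsilon$-graphs on finite nets, take spanning trees of controlled total length, parametrize them by constant-speed traversals, and pass to a uniform limit via Arzel\`a--Ascoli to obtain a Lipschitz surjection $[0,1]\to C$; an arc is then extracted from the resulting path. So your main argument and the cited proof are genuinely different. Your continuum-theoretic route is shorter and conceptually cleaner but leans on two nontrivial black boxes (the convergence-continuum characterization of local connectedness, and arcwise connectedness of Peano continua); the Ambrosio--Tilli construction is more self-contained and, as a bonus, yields a Lipschitz parametrization of $C$ with length comparable to $\HH^1(C)$, which your approach does not directly give.
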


	 \subsection{Differential structure}
	 Besides $2$-Ahlfors regularity, the other standing assumption will be that $(\XX,\dist,\mass)$ supports a weak $(1,1)$-Poincaré inequality, that is 
	 that there exist constants $C_P>0$ and $\lambda\ge1$ such that
	 \begin{equation}\label{Poincare}
	\dashint_{B_r(x)}\abs{f-f_{B_r(x)}}\dd{\mass}\le C_P r \dashint_{B_{\lambda r}(x)} g\dd{\mass}
	 \end{equation}
	 whenever $f\in\Lploc^1(\XX)$ and $g$ is an upper gradient of $f$ according to the notion introduced by Heinonen and Koskela in \cite{hkquasi} (other references on this topic are e.g.\ \cite{Bjorn-Bjorn11,Hajasz:1996aa,Shanmugalingam00}). Precisely, a Borel function $g$ is an upper gradient of a Borel function $f$ if 
	 $$ \abs{f(\gamma_1)-f(\gamma_0)}\le \int_0^1 g(\gamma_t)\abs{\dot{\gamma_t}}\dd{t}\quad\text{for every }\gamma\in\mathrm{AC}([0,1], \XX).$$
	 If $f\in\LIPloc(\XX)$, then it is clear that $\lip(f)$ is an upper gradient of $f$ and, if the space is doubling and supports a weak $(1,1)$-Poincaré inequality, it is minimal, in the sense that if $g$ is another upper gradient for $f$ on $A$, then $\lip(f)\le g\ \mass$-a.e.\ (\cite{Cheeger00}).
	 
	 It is common to denote a doubling m.m.s.\ that supports a weak $(1,1)$-Poincaré inequality $\PI$ space. This structure has several important consequences, among them, the fact that $\PI$ spaces are quasi-convex (\cite[Theorem 3.2]{amb01}). Taking into account that $\PI$ spaces are proper, it follows that there exists a geodesic metric $\tilde{\dist}$ that is bi-Lipschitz to $\dist$, in the sense that  $$\dist(x,y)\le\tilde{\dist }(x,y)\le C_Q{\dist }(x,y)\quad\text{for every }x,y\in\XX.$$
	 If $(\XX,{\dist},\mass)$ is a geodesic $\PI$ space (e.g.\ $(\XX,\tilde{\dist},\mass)$ as above), it was shown in \cite[Corollary 9.8]{hajkosk} that it supports a $(1,1)$-Poincaré inequality, i.e.\ a weak $(1,1)$-Poincaré inequality with $\lambda=1$.
	 
	\subsection{Functions of bounded variation on and into metric measure spaces}
We assume that the reader is familiar with the basics of the theory of functions of bounded variation and sets of finite perimeter in metric measure spaces developed in \cite{amb00,amb01,MIRANDA2003,ambmetric}. We recall here the main notions for the reader's convenience.

Let $(\XX,\dist,\mass)$ be an m.m.s.\ Given $A\subseteq\XX$ open and $f\in\Lploc^1(A)$, we define the total variation
\begin{equation}\notag
	\abs{\DIFF f}(A)\defeq\inf\left\{\liminf_k \int_\XX\lip(f_k)\dd{\mass} :f_k\in\LIPloc(A)\cap\Lploc^1(A),\ f_k\rightarrow f \text{ in }\Lploc^1(A)\right\}
\end{equation}
and we write $f\in\BV(A)$ if $\abs{\DIFF f}(A)<\infty$ and $f\in\BVloc(A)$ if $f\in\BV(A')$ for every $A'$ open with $A'\Subset A$. When we feel the need to specify the reference measure, we write $\BV(A;\mass)$ or $\BVloc(A;\mass)$. If $\chi_E$ is the characteristic function of a Borel subset $E\subseteq A$, we say that $E$ is a set of locally finite perimeter in $A$ provided that $\chi_E\in\BVloc(A)$ and, of course, we say that $E$ is a set of finite perimeter in $A$ if $\chi_E\in\BV(A)$.  If $f\in\BVloc(A),$ $\abs{\DIFF f}(\,\cdot\,)$ turns out to be the restriction to open subsets of $A$ of a Borel measure that we still denote with the same symbol and we call total variation. If $f=\chi_E$, we denote $\abs{\DIFF f}(\,\cdot\,)$ also with $\per(E,\,\cdot\,)$.
It holds the following coarea formula.
\begin{prop}[Coarea]\label{coarea}
	Let $(\XX,\dist,\mass)$ be an m.m.s.\ Given $A\subseteq\XX$ open and $f\in\Lploc^1(A)$, then 
	\begin{equation}\label{coareaeq}
		{\abs{\DIFF f}}(A)=\int_\RR {\per(\{f>r\},A)}\dd{r} .
	\end{equation}
	In particular, if the right hand side of \eqref{coareaeq} is finite, then $f\in\BV(A)$.
\end{prop}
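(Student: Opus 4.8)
The plan is to prove the two inequalities in \eqref{coareaeq} separately; note that neither the Poincaré inequality nor Ahlfors regularity plays any role here, so we may take $(\XX,\dist,\mass)$ to be an arbitrary m.m.s.\ and the argument is the classical one. Two soft consequences of the definition of $\abs{\DIFF\cdot}$ will be used throughout. First, $w\mapsto\abs{\DIFF w}(A)$ is lower semicontinuous with respect to $\Lploc^1(A)$-convergence: given $w_j\to w$ in $\Lploc^1(A)$, one diagonalises near-optimal Lipschitz recovery sequences for the $w_j$ against an exhaustion of $A$ by open sets $\Subset A$. Second, $\abs{\DIFF(u+v)}(A)\le\abs{\DIFF u}(A)+\abs{\DIFF v}(A)$ and $\abs{\DIFF(cu)}(A)=\abs{c}\,\abs{\DIFF u}(A)$, because $\lip(u_k+v_k)\le\lip(u_k)+\lip(v_k)$; combined with lower semicontinuity this upgrades to countable subadditivity $\abs{\DIFF\sum_k u_k}(A)\le\sum_k\abs{\DIFF u_k}(A)$ whenever the series converges in $\Lploc^1(A)$. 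As a preliminary remark: since $\{f>r\}$ increases to $\{f>r_0\}$ as $r\downarrow r_0$ and $\mass$ is finite on bounded sets, $\chi_{\{f>r\}}\to\chi_{\{f>r_0\}}$ in $\Lploc^1(A)$, so lower semicontinuity makes $r\mapsto\per(\{f>r\},A)$ right lower semicontinuous, hence Borel, and the right-hand side of \eqref{coareaeq} is well defined in $[0,\infty]$.

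For the inequality $\ge$ I would first establish the coarea inequality for Lipschitz functions: if $u\in\LIPloc(A)\cap\Lploc^1(A)$ then $\int_\RR\per(\{u>r\},A)\dd{r}\le\int_A\lip(u)\dd{\mass}$ (trivial if the right-hand side is $\infty$). For $r\in\RR$ and $\epsilon>0$ put $v_{r,\epsilon}\defeq\min\{1,\max\{0,\epsilon^{-1}(u-r)\}\}$; then $v_{r,\epsilon}\in\LIPloc(A)\cap\Lploc^1(A)$ with $\lip(v_{r,\epsilon})\le\epsilon^{-1}\lip(u)\,\chi_{\{r\le u\le r+\epsilon\}}$, and $v_{r,\epsilon}\to\chi_{\{u>r\}}$ pointwise, hence in $\Lploc^1(A)$, as $\epsilon\downarrow0$ for every $r\in\RR$. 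Lower semicontinuity yields $\per(\{u>r\},A)\le\liminf_{\epsilon\downarrow0}\epsilon^{-1}\int_{\{r\le u\le r+\epsilon\}\cap A}\lip(u)\dd{\mass}$ for every $r$; integrating over $r$, applying Fatou's lemma along a sequence $\epsilon\downarrow0$, and using Tonelli (note $\int_\RR\chi_{\{r\le u(x)\le r+\epsilon\}}\dd{r}=\epsilon$) gives the claim. Now assume $\abs{\DIFF f}(A)<\infty$ (otherwise there is nothing to prove) and pick a recovery sequence $f_k\in\LIPloc(A)\cap\Lploc^1(A)$ with $f_k\to f$ in $\Lploc^1(A)$ and $\int_A\lip(f_k)\dd{\mass}\to\abs{\DIFF f}(A)$. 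From $\int_\RR\norm{\chi_{\{f_k>r\}}-\chi_{\{f>r\}}}_{\Lp^1(A')}\dd{r}=\int_{A'}\abs{f_k-f}\dd{\mass}$ for $A'\Subset A$, a diagonal subsequence satisfies $\chi_{\{f_k>r\}}\to\chi_{\{f>r\}}$ in $\Lploc^1(A)$ for a.e.\ $r$; lower semicontinuity of the perimeter, Fatou's lemma and the Lipschitz coarea inequality then give
$$\int_\RR\per(\{f>r\},A)\dd{r}\le\liminf_k\int_\RR\per(\{f_k>r\},A)\dd{r}\le\liminf_k\int_A\lip(f_k)\dd{\mass}=\abs{\DIFF f}(A).$$

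For the inequality $\le$, set $V\defeq\int_\RR\per(\{f>r\},A)\dd{r}$ and assume $V<\infty$. Writing $f=f^+-f^-$ and using $\per(\{f>r\},A)=\per(\{f\le r\},A)$ together with $\per(\{g\ge s\},A)=\per(\{g>s\},A)$ for a.e.\ $s$ (sets agreeing $\mass$-a.e.\ have the same perimeter), one checks $V=\int_\RR\per(\{f^+>r\},A)\dd{r}+\int_\RR\per(\{f^->r\},A)\dd{r}$; since $\abs{\DIFF f}(A)\le\abs{\DIFF f^+}(A)+\abs{\DIFF f^-}(A)$, it suffices to treat $f\ge0$. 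For $h>0$ and $t\in[0,h)$ set
$$f_h^t\defeq t+h\sum_{k=1}^\infty\chi_{\{f>kh+t\}},$$
so that $\abs{f_h^t-f}\le h$ pointwise, hence $f_h^t\to f$ in $\Lploc^1(A)$ as $h\downarrow0$ (using $f\in\Lploc^1(A)$ and $\mass$ finite on bounded sets), while countable subadditivity gives $\abs{\DIFF f_h^t}(A)\le h\sum_{k\ge1}\per(\{f>kh+t\},A)=:\Phi_h(t)$. Tonelli yields $\tfrac{1}{h}\int_0^h\Phi_h(t)\dd{t}=\int_h^\infty\per(\{f>r\},A)\dd{r}\le V$, so some $t_h\in[0,h)$ has $\Phi_h(t_h)\le V$, whence $\abs{\DIFF f_h^{t_h}}(A)\le V$. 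Letting $h\downarrow0$ and using lower semicontinuity gives $\abs{\DIFF f}(A)\le V$, which also yields the last assertion of the Proposition.

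The two genuinely load-bearing steps are the truncation argument for the Lipschitz coarea inequality and the shifted layer-cake discretisation $f_h^t$; everything else — lower semicontinuity and countable subadditivity of $\abs{\DIFF\cdot}$ together with Fubini/Fatou bookkeeping — is formal. Beyond those, the only non-trivial limiting step is the extraction of a subsequence along which the superlevel sets $\{f_k>r\}$ converge for a.e.\ $r$, which I expect to be the one spot requiring a little care.
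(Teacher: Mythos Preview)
The paper does not actually prove Proposition~\ref{coarea}: it is stated without proof in the preliminaries section, as a known result drawn from the references on $\BV$ functions in metric measure spaces (in particular \cite{MIRANDA2003}). So there is no ``paper's own proof'' to compare against.

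That said, your argument is exactly the classical one in this setting and is correct. The two inequalities are handled precisely as in Miranda's original paper: the $\ge$ direction via the truncation $v_{r,\epsilon}$ for locally Lipschitz functions followed by Fatou on an approximating sequence, and the $\le$ direction via the discretised layer cake $f_h^t$ together with an averaging in $t$ to select a good shift. Your auxiliary facts (lower semicontinuity of $\abs{\DIFF\cdot}$ under $\Lploc^1$-convergence, countable subadditivity, and Borel measurability of $r\mapsto\per(\{f>r\},A)$) are all routine and correctly justified. The only point I would flag for cosmetic care is the reduction to $f\ge 0$: your identity $V=V^++V^-$ is correct, but the line ``$\per(\{g\ge s\},A)=\per(\{g>s\},A)$ for a.e.\ $s$'' should be grounded in the observation that $\int_\RR\mass(\{g=s\}\cap A')\dd s=0$ for every $A'\Subset A$, rather than left implicit.
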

We also recall that sets of finite perimeter are an algebra, more precisely, if $E$ and $F$ are sets of (locally) finite perimeter, then 
\begin{equation}\notag
	\per(E,\,\cdot\,)=\per(\XX\setminus E,\,\cdot\,)\quad\text{and}\quad\per(E\cap F,\,\cdot\,)+\per(E\cup F,\,\cdot\,)=\per(E,\,\cdot\,)+\per(F,\,\cdot\,).
\end{equation}
Notice that $\abs{\DIFF (\varphi \circ f)}\le L \abs{\DIFF f}$ whenever $f\in\BV(A)$ and $\varphi$ is $L$-Lipschitz. Also, it is easy to verify that if $(\XX,\dist,\mass)=(\RR^n,\dist_e,\LL^n)$ then the definition of total variation, and hence of function of (locally) bounded variation, coincides with the classical one.

We will also need the following that is \cite[Lemma 3.1]{lahti2016strong}. 
\begin{lemma}\label{LipEst}
	Let $\XX$ be a $(\XX,\dist ,\mass)$ be a $\PI$ space. Then there exists a constant $C>0$ such that for any $U\subseteq \XX$ open and $f \in \LIPloc(U)$ it holds that
	\begin{equation}\label{LipEsteq}
		\int_{-\infty}^{\infty}\HH^h(U\cap \partial \{f> t\}) \dd{t} \le C \int_U \lip(f)  \dd \mass.
	\end{equation}
\end{lemma}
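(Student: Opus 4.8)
The plan is to deduce the estimate from the coarea-type behaviour of the total variation together with the comparison between the perimeter measure and the $1$-codimensional Hausdorff measure on superlevel sets of a locally Lipschitz function. First I would reduce to the case $f \in \LIP(U)$ by an exhaustion argument: write $U = \bigcup_k U_k$ with $U_k \Subset U_{k+1} \Subset U$, apply the estimate on each $U_k$ (where $f$ is genuinely Lipschitz after multiplying by a cutoff, or simply restricting, since $f \in \LIPloc$), and pass to the limit using monotone convergence on both sides, noting that $\HH^h(U \cap \partial\{f>t\}) = \lim_k \HH^h(U_k \cap \partial\{f>t\})$ and $\int_{U_k}\lip(f)\,\dd\mass \nearrow \int_U \lip(f)\,\dd\mass$. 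So assume $f$ is $L$-Lipschitz on $U$.

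Next, the heart of the matter is to relate, for a.e.\ level $t$, the quantity $\HH^h(U \cap \partial\{f>t\})$ to the perimeter $\per(\{f>t\}, U)$. The key input is the fact, valid in $\PI$ spaces, that for a set $E$ of finite perimeter one has a two-sided comparison $\per(E, \cdot) \asymp \HH^h(\cdot \cap \partial^* E)$ on the measure-theoretic boundary, and that for superlevel sets $\{f>t\}$ of a Lipschitz function the topological boundary $U \cap \partial\{f>t\}$ agrees with the essential boundary up to an $\HH^h$-null set for a.e.\ $t$ — this is where I expect the main obstacle to lie, since controlling the topological boundary (rather than the reduced/essential boundary) by the perimeter requires some care, and is presumably the reason the cited reference \cite{lahti2016strong} is invoked rather than the statement being proved here from scratch. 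Granting this, for a.e.\ $t$ we obtain $\HH^h(U \cap \partial\{f>t\}) \le C_1 \per(\{f>t\}, U)$.

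Then I would integrate in $t$ and apply the coarea formula, Proposition \ref{coarea}:
\begin{equation}\notag
\int_{-\infty}^{\infty} \HH^h(U \cap \partial\{f>t\})\,\dd t \le C_1 \int_{-\infty}^{\infty} \per(\{f>t\}, U)\,\dd t = C_1\,\abs{\DIFF f}(U).
\end{equation}
Finally, since $f \in \LIP(U)$, the function $\lip(f)$ is an upper gradient of $f$, and by the definition of total variation (taking the constant approximating sequence $f_k \equiv f$) we get $\abs{\DIFF f}(U) \le \int_U \lip(f)\,\dd\mass$. Combining the last two displays yields \eqref{LipEsteq} with $C = C_1$. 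The only genuinely delicate point, to repeat, is the passage from the essential boundary to the topological boundary of the superlevel sets; everything else is a routine assembly of the coarea formula, the definition of $\abs{\DIFF f}$, and the Ahlfors-regularity comparison \eqref{comaprehauss} between $\HH^h$ and $\HH^1$ if one prefers to phrase the boundary bound in terms of $\HH^1$.
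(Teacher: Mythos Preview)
The paper does not give its own proof of this lemma; it simply imports the statement from \cite[Lemma~3.1]{lahti2016strong}. So there is no argument in the paper to compare your proposal against.

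Your outline has the right architecture: the coarea formula (Proposition~\ref{coarea}) together with the trivial bound $\abs{\DIFF f}(U)\le\int_U\lip(f)\,\dd\mass$ (obtained by taking the constant sequence $f_k=f$ in the definition of total variation) are exactly what one wants once the levelwise comparison
\[
\HH^h(U\cap\partial\{f>t\})\le C_1\,\per(\{f>t\},U)\quad\text{for a.e.\ }t
\]
is in hand. You are also right that this comparison is the entire difficulty, and you do not prove it---you flag it as the obstacle and defer to the cited reference. As written, then, your proposal is a correct reduction of the lemma to its hard core rather than a proof of it.

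On that hard core: the route you sketch (show that for a.e.\ $t$ the topological boundary of $\{f>t\}$ agrees with the essential boundary up to an $\HH^h$-null set, then invoke the known comparison $\per(E,\,\cdot\,)\asymp\HH^h\mres\partial^*E$ valid in $\PI$ spaces) is a plausible strategy, but the first step is itself nontrivial in the metric setting and you offer no argument for it; nothing prevents, a priori, the topological boundary from being $\HH^h$-larger than the essential one at a fixed level. The exhaustion reduction to globally Lipschitz $f$ is fine (restriction to $U_k\Subset U$ suffices; the cutoff alternative you mention would alter $\lip(f)$ and should be avoided).

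One minor slip: your closing remark about the Ahlfors-regularity comparison \eqref{comaprehauss} between $\HH^h$ and $\HH^1$ is out of place here. The lemma is stated for general $\PI$ spaces and its conclusion is phrased in terms of $\HH^h$; the passage to $\HH^1$ via $2$-Ahlfors regularity happens only afterwards, in \eqref{LipEsteq1} and \eqref{LipEsteq2}, and is not part of the lemma itself.
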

Notice that under the additional assumption of $2$-Ahlfors regularity, \eqref{comaprehauss} implies that we can read \eqref{LipEsteq} as 
\begin{equation}\label{LipEsteq1}
	\int_{-\infty}^{\infty}\HH^1(U\cap \partial \{f> t\}) \dd{t} \le C \int_U \lip(f)  \dd \mass;
\end{equation}
also, for $\L^1$ almost every $t$ it holds that $\mass(\{f=t\}\cap U)=0$ and for such $t$ it holds
$$
\{f=t\}\cap U\subseteq (\partial \{f>t\}\cap U)\cup(\partial \{f<t\}\cap U)
$$
and therefore
\begin{equation}\label{LipEsteq2}
\int_\RR\HH^1( \{f=t\}\cap U)\dd{t}\le C\int_U\lip(f)\dd{\mass}.
\end{equation}

Let now $(\YY,\rho)$ be a locally compact and separable metric space. Given $A\subseteq\XX$ open and $f:A\rightarrow\YY$ such that for some (hence all) $\bar{y}\in\YY$ it holds $\rho(f(\,\cdot\,),\bar{y})\in\Lploc^1(A)$ we define (see \cite[Definition 2.1]{ambmetric})
$$\abs{\DIFF f}\defeq \mathcal{M} - \sup\left\{\abs{\DIFF (\varphi\circ f)}:\varphi\in\LIP(\YY) \text{ is $1$-Lipschitz}\right\}  $$
and extend obviously the definitions of the classes $\BV(A)$ and $\BVloc(A)$ to this setting. Notice that in the case $(\YY,\dist)=(\RR,\dist_e)$ this definition coincides with the one given above, and, if $(\YY,\dist)=(\RR^n,\dist_e)$ and $f=f_1,\dots,f_n$ then $f\in\BV(A)$ if and only if $f_i\in\BV(A)$ for every $i=1,\dots,n$.

Assume now $(\XX,\dist,\mass)=(\RR,\dist_e,\LL^1)$. Let $\gamma:[0,l]\rightarrow\YY$ be a curve.
Then \cite[Remark 2.2]{ambmetric} implies that (recall \eqref{defnlen}) $$L(\gamma)=\abs{\DIFF \gamma}(0,l)$$
and, in particular, if $\gamma$ is either injective or a Jordan curve, then, by \eqref{leneH}, 
\begin{equation}\label{diffeH}
	\abs{\DIFF \gamma}(0,l)=\HH^1(\gamma).
\end{equation}

If $G\subseteq\RR^2$ is open and $(\YY,\rho)$ is a locally compact metric space, we will often consider a map $f:G\rightarrow\YY$. We denote $G_x\defeq\{y\in\RR:(x,y)\in G\}$ and similarly $G^y\defeq\{x\in\RR:(x,y)\in G\}$. We then consider the restriction of $f$ to the lines $\{x\}\times G_x$ as $f_x$ i.e.\ $f_x:G_x\rightarrow\YY$ is defined as $y\mapsto f(x,y)$. Similarly, $f^y:G^y\rightarrow\YY$ is defined as $x\mapsto f(x,y)$.
We recall that \cite[Proposition 2.1]{ambmetric} implies that  \begin{equation}\label{slicingeq}
	\int_\RR \abs{\DIFF f_x}(G_x)\dd{x}\le\abs{\DIFF f}(G),\ \int_\RR \abs{\DIFF f^y}(G^y)\dd{y}\le\abs{\DIFF f}(G).
\end{equation}
	\subsection{Sobolev functions on and into metric measure spaces}
	
Let $(\XX,\dist,\mass)$ be a $\PI$ space and $(\YY,\rho)$ be a locally compact and separable metric space. Let $A\subseteq\XX$ open and $f:A\rightarrow \YY$. We say $f\in\W(A)$ (resp.\ $\Wloc(A)$) if $f\in\BV(A)$ (resp.\ $\BVloc(A)$) and moreover $\abs{\DIFF f}\ll\mass$. 

We justify now this approach, as defining Sobolev functions with integrability exponent $1$ on metric measure spaces is a rather delicate question (here we take advantage of the $\PI$ assumption). We will deal with only two cases: either $\XX=\RR^2$ or $\YY=\RR^2$, that is to say that either the domain or the codomain of the map will be Euclidean.
In the former case, our approach is proved to be equivalent to the one by Reshetnyak (\cite{Resh1997,Resh2004}) and then to the one by Korevaar and Schoen (\cite{KS93}). For the latter case, notice first that if $f=(f_1,f_2):A\rightarrow\RR^2$, then $f\in\W(A,\RR^2)$ if and only if $f_i\in\W(A,\RR)$ for $i=1,2$. Then, for a comprehensive discussion on equivalent definitions of the space $\W(A,\RR)$, we refer to \cite[Section 4.6]{DiM14a} and the references therein. In particular, it is possible to prove (see the proof of \cite[Theorem 4.6]{HKLL16} and \cite[Proposition 4.26]{Cheeger00}) that $f\in\W(A,\RR)$ if and only if we can find a sequence $\{f_n\}_n\subseteq\LIPloc(A)\cap\Lp^1(A)$ such that $f_n\rightarrow f$ in $\Lp^1(A)$ and $\lip(f_n)\rightharpoonup g$ weakly in $\Lp^1(A)$ for some $g\in\Lp^1(A)$. Also, it has been proved that $f\in\W(A,\RR)$ if and only if there exists $\tilde{f}:A\rightarrow\XX$ with $\tilde{f}=f$ $\mass$-a.e.\ and $\tilde{f}\in\mathrm{N}^{1,1}(A)$ (see \cite{Shanmugalingam00} for the definition of the Newtonian space $\mathrm{N}^{1,1}(A)$).

\subsection{Connectedness}\label{connsect}
Let $S$ be a topological space. We call connected components of $S$ the maximal connected subsets of $S$. If $S'\subseteq S$ and $a,b\in S\setminus S'$, we say that $S'$ separates $a$ and $b$ (in $S$) if $a$ and $b$ are in different connected components of $S\setminus S'$. 

We will be mainly interested in connectedness in $\RR^2$. Recall that an open set $A\subseteq\RR^2$ is connected if and only if it is connected by smooth curves. In particular, a closed set $C\subseteq\RR^2$ separates $a,b\in\RR^2\setminus C$ if and only if there exists no smooth curve $\gamma:[0,1]\rightarrow\RR^2\setminus C$ with $\gamma(0)=a$ and $\gamma(1)=b$. 

We will use the following result taken from \cite[V.14.3]{Newman}, that is a consequence of the so called Alexander lemma, see e.g.\ \cite[Theorem 30.1]{Moise23}, that we write below in a simplified version.
\begin{thm}\label{separates}
Let $C\subseteq\RR^2$ closed and $a,b\in\RR^2\setminus C$. If $C$ separates $a$ and $b$, then there exists a connected component of $C$ that separates $a$ and $b$.
\end{thm}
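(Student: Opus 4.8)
The plan is to reduce to the case that $C$ is compact and then combine two classical ingredients: the coincidence of connected components with quasi-components on compact sets, and the Alexander lemma \cite[Theorem 30.1]{Moise23}. We argue the contrapositive: assume that no connected component of $C$ separates $a$ from $b$, and let us show that $C$ itself does not. For the reduction, pass to the one-point compactification $S^2=\RR^2\cup\{\infty\}$ and put $\hat C:=\overline{C}^{\,S^2}$, which is compact and equals $C$ or $C\cup\{\infty\}$. Using that deleting a single point from a connected open subset of a surface leaves it connected, one checks that $C$ separates $a,b$ in $\RR^2$ if and only if $\hat C$ separates them in $S^2$, and that any component of $\hat C$ not containing $\infty$ is a component of $C$ across which separation in $\RR^2$ and in $S^2$ agree. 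It therefore suffices to prove the compact statement for $\hat C$ and trace the resulting component back to $C$; the only subtlety is a component passing through $\infty$, which one handles by passing to the boundary in $S^2$ of a complementary domain and iterating (taking a nested intersection of continua in the limit) until one reaches a compact connected subset of $C$, whose ambient $C$-component then separates $a$ from $b$. This step is soft but must be done carefully, and is the first of two places where real care is needed.

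Now assume $C\subseteq S^2$ compact, with no component separating $a$ from $b$. As $C$ is compact Hausdorff, the component $C_x$ of each $x\in C$ equals its quasi-component, namely $\bigcap\{D:D\subseteq C\text{ relatively clopen},\ x\in D\}$. Since $C_x$ does not separate $a,b$, there is an arc $\sigma_x$ from $a$ to $b$ in the open (hence locally path-connected) set $S^2\setminus C_x$; its image is compact and disjoint from $C_x$, so by the finite intersection property it already misses a finite intersection $D_x$ of relatively clopen sets containing $x$ — itself a relatively clopen neighbourhood of $x$ in $C$ not separating $a,b$ (a subset of a non-separating set is non-separating). By compactness, finitely many $D_{x_1},\dots,D_{x_m}$ cover $C$; setting $E_j:=D_{x_j}\setminus(D_{x_1}\cup\dots\cup D_{x_{j-1}})$ yields a partition $C=E_1\sqcup\dots\sqcup E_m$ into pairwise disjoint compact sets, none of which separates $a$ from $b$.

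Finally, a finite disjoint union of compact sets none of which separates $a$ from $b$ does not separate $a$ from $b$: applied to $E_1,\dots,E_m$ this says $C$ does not separate $a,b$, which is the desired contrapositive. By induction on $m$ it suffices to treat the union of two such sets, and this is precisely the Alexander lemma in the degenerate case of empty (more generally, connected) intersection: if $M_1,M_2\subseteq S^2$ are closed, $M_1\cap M_2$ is connected or empty, and neither separates $a$ from $b$, then neither does $M_1\cup M_2$. I expect the Alexander lemma — more precisely, its elementary proof by a combinatorial ``chain'' argument on a sufficiently fine grid of squares — to be the main obstacle and the genuine content of the theorem; the remaining ingredients are soft, the only other point requiring care being the compactification reduction described above.
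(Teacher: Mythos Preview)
The paper does not actually give a proof of this theorem: it quotes it from \cite[V.14.3]{Newman}, recording only that it ``is a consequence of the so called Alexander lemma'' and then stating a simplified form of the latter. Your proposal is therefore not competing with a proof in the paper but rather supplying the argument the paper outsources. In that respect your plan is exactly the classical one the citation points to: pass to a compact setting, use that components coincide with quasi-components there, extract a finite clopen partition into non-separating pieces, and conclude by iterating the Alexander lemma on pairs with empty intersection. This part of your argument is correct and standard. (Note that the version of the Alexander lemma you actually use --- closed $M_1,M_2$ with $M_1\cap M_2$ connected or empty --- is the full Moise formulation, stronger than the ``$C_1,C_2$ connected'' simplification recorded in the paper; you are right to invoke the full version.)

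There is, however, a genuine soft gap in your reduction to the compact case, precisely at the point you flag. If the separating component $K$ of $\hat C$ contains $\infty$, your proposed fix is to replace $K$ by the boundary of the complementary domain of $a$ and iterate, taking a nested intersection of continua in the limit. The nested-intersection step is fine (a decreasing intersection of separating continua still separates, by a finite-intersection-property argument with a putative connecting arc), but nothing forces the procedure to escape $\infty$: already when $K$ equals the boundary of that complementary domain the iteration stabilises immediately with $\infty\in K_\infty$, so you have not produced a compact connected subset of $C$. One clean way around this is to observe that for every $x\in C$ the closure in $S^2$ of its $C$-component is a continuum not separating $a,b$ (bounded components stay as they are; unbounded ones acquire only the point $\infty$, and removing a single point from a connected open planar set keeps it connected), and then run your clopen-cover argument using these continua together with the singleton $\{\infty\}$ in place of the $\hat C$-components. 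Alternatively, note that in the paper's sole use of the theorem (the proof of Proposition~\ref{Jordanmain}) the set $K_n$ to which it is applied is explicitly compact, so the compact case you have proved cleanly already suffices for the application.
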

\begin{lemma}[Alexander]
Let $C_1,C_2\subseteq\RR^2$ closed and connected and let $a,b\in\RR^2\setminus (C_1\cup C_2)$. If $C_1\cup C_2$ separates $a$ and $b$, then either $C_1$ or $C_2$ separate $a$ and $b$.
\end{lemma}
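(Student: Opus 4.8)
This is the classical Alexander Lemma \cite[Theorem 30.1]{Moise23}, and I would prove it by translating the separation hypotheses into the zeroth (reduced) homology of the complements and running a Mayer--Vietoris argument. Set $U_i\defeq\RR^2\setminus C_i$ for $i=1,2$; these are open, with $U_1\cap U_2=\RR^2\setminus(C_1\cup C_2)$ and $U_1\cup U_2=\RR^2\setminus(C_1\cap C_2)$, and $a,b\in U_1\cap U_2$. Since open subsets of the plane are locally path connected, for a closed set $C$ and points $a,b\notin C$ one has that ``$C$ separates $a$ and $b$'' if and only if the class $[a]-[b]$ is nonzero in $\widetilde H_0(\RR^2\setminus C;\mathbb Z)$. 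I would argue by contraposition: assuming that neither $C_1$ nor $C_2$ separates $a$ and $b$, the class $[a]-[b]$ vanishes in $\widetilde H_0(U_1;\mathbb Z)$ and in $\widetilde H_0(U_2;\mathbb Z)$, and the goal is to deduce that $[a]-[b]=0$ in $\widetilde H_0(U_1\cap U_2;\mathbb Z)$.

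The decisive input is that $H_1(U_1\cup U_2;\mathbb Z)=H_1\bigl(\RR^2\setminus(C_1\cap C_2);\mathbb Z\bigr)=0$. When $C_1\cap C_2=\emptyset$ this is trivial, as then $U_1\cup U_2=\RR^2$; this disjoint case is already the one that underlies Theorem~\ref{separates}, where the $C_i$ are distinct connected components of a closed set. In general one passes to the one-point compactification $S^2$ (adjoining $\infty$ to any unbounded $C_i$, which affects neither its closedness and connectedness nor whether $a,b$ are separated by $C_1$, $C_2$, or $C_1\cup C_2$) and invokes Alexander duality, by which $H_1(S^2\setminus K)=0$ for every connected, or empty, compactum $K$; this is legitimate exactly when $C_1\cap C_2$ is connected, which is the hypothesis under which \cite[Theorem 30.1]{Moise23} is stated. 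Granting the vanishing, the reduced Mayer--Vietoris sequence of the open cover $\{U_1,U_2\}$ (available since $U_1\cap U_2\ne\emptyset$) contains the exact segment
$$H_1(U_1\cup U_2;\mathbb Z)\longrightarrow\widetilde H_0(U_1\cap U_2;\mathbb Z)\longrightarrow\widetilde H_0(U_1;\mathbb Z)\oplus\widetilde H_0(U_2;\mathbb Z).$$
Its left-hand term being zero, the second arrow is injective; as $[a]-[b]$ maps to $0$ under it, we get $[a]-[b]=0$ in $\widetilde H_0(U_1\cap U_2;\mathbb Z)$, i.e.\ $C_1\cup C_2$ does not separate $a$ and $b$, contradicting the hypothesis.

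The only genuinely non-routine point is the vanishing of $H_1$ of the complement of $C_1\cap C_2$, and this is where some hypothesis on the intersection is indispensable: the assertion fails without one, as is seen by taking $C_1$ and $C_2$ to be the two closed semicircular arcs of a circle, which meet in two points and together separate the centre of the circle from a distant point, while neither arc separates them. The remaining ingredients --- the equivalence between topological separation and non-vanishing of $\widetilde H_0$, the reduction to $S^2$, and the bookkeeping of the Mayer--Vietoris terms --- are standard. An equivalent route bypassing Alexander duality rests on Borsuk's separation criterion: a compact set $C\subseteq\RR^2$ fails to separate $a$ from $b$ precisely when the map $C\to S^1$, $x\mapsto\bigl((x-a)/\abs{x-a}\bigr)\bigl((x-b)/\abs{x-b}\bigr)^{-1}$, is null-homotopic; the content then becomes that two maps null-homotopic on $C_1$ and on $C_2$ respectively glue to a null-homotopic map on $C_1\cup C_2$ as soon as $C_1\cap C_2$ is connected --- isolating the same obstruction.
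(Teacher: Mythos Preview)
The paper does not prove this lemma; it is quoted (as a ``simplified version'') with a reference to \cite[Theorem 30.1]{Moise23} and to \cite[V.14.3]{Newman}, so there is no proof to compare yours against. Your Mayer--Vietoris/Alexander-duality argument is a perfectly valid and standard route to the result, and your alternative sketch via Borsuk's criterion is equally legitimate.

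More to the point, you are right to flag that the statement as written in the paper is actually false without the extra hypothesis that $C_1\cap C_2$ be connected (or empty): your semicircles counterexample is correct. The version in Moise does carry this hypothesis, and the paper's ``simplification'' has dropped it. This does not, however, affect the paper's results: the lemma is invoked only to justify Theorem~\ref{separates}, and there the $C_i$ in play are distinct connected components of a closed set, hence pairwise disjoint, so the missing hypothesis is trivially satisfied. Your write-up already isolates exactly this point --- that the vanishing of $H_1(\RR^2\setminus(C_1\cap C_2))$ is the only non-routine step and is precisely what fails in the counterexample --- which is the correct diagnosis.
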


	\section{Main results}
	\subsection{Jordan curves}
	\begin{prop}\label{Jordanmain}
		Let $(\XX,\dist,\mass)$ be a $2$-Ahlfors regular m.m.s.\ supporting a weak $(1,1)$-Poincaré inequality. Let $\gamma\subseteq \XX$ be a Jordan curve such that there exists an open set $D\subseteq\XX $ that is homeomorphic to an open set $E\subseteq\RR^2$ with $\gamma\subseteq D$. Assume moreover that $\gamma$ separates $D$ into exactly two connected components, say $A_1$ and $A_2$. Then
		$$\HH^1(\gamma)\le C\min\{\per(A_1,D),\ \per(A_2,D)\} $$
		where $C$ is a constant that depends only on the space $(\XX,\dist,\mass)$. 
		In particular, if $\per(A_1,D)<\infty$ or $\per(A_2,D)<\infty$, then $\mass(\gamma)=0$ so that $\per(A_1,D)=\per(A_2,D)$.
	\end{prop}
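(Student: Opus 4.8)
The plan is to approximate the curve $\gamma$ from the $\BV$ side and extract from the approximation a "piecewise curve" structure that topologically separates $D$, so that its $\HH^1$-length controls $\HH^1(\gamma)$ from below. We may clearly assume $\per(A_1,D)<\infty$, and by the symmetry of the statement (and $\per(A_1,\cdot)=\per(A_2,\cdot)$ modulo the boundary term we must still control) it suffices to bound $\HH^1(\gamma)$ by $C\per(A_1,D)$. First I would work inside the homeomorphic chart: transport everything to the planar open set $E\subseteq\RR^2$ is not quite possible because the perimeter lives on $\XX$, but the chart $D\cong E$ lets us use planar topology (Theorem \ref{separates}, the Alexander lemma) for the separation arguments. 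Fix $A_1'\Subset D$ a slightly shrunk neighborhood; since $\chi_{A_1}\in\BV(D)$, by the definition of total variation there is a sequence $u_k\in\LIPloc(D)\cap\Lp^1(D)$ with $u_k\to\chi_{A_1}$ in $\Lploc^1(D)$ and $\int_D\lip(u_k)\dd\mass\to\per(A_1,D)$ (up to $\epsilon$). By the coarea-type estimate \eqref{LipEsteq1} applied on a suitable open $U$ with $A_1'\Subset U\Subset D$, for each $k$ we can select a level $t_k\in(0,1)$ with
\[
\HH^1\big(U\cap\partial\{u_k>t_k\}\big)\le \frac{C}{\mass-\text{measure of admissible levels}}\int_U\lip(u_k)\dd\mass \lesssim \per(A_1,D)+\epsilon,
\]
and such that $V_k\defeq\{u_k>t_k\}$ satisfies $\mass(V_k\triangle A_1)\to 0$ (using $L^1$ convergence and choosing $t_k$ away from the bad set).

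The heart of the argument is the topological step: I claim that for $k$ large the closed set $F_k\defeq U\cap\partial V_k$ must separate, inside $D$, a point $a\in A_1'\cap A_1$ from a point $b\in A_1'\cap A_2$. Indeed $V_k$ is open, $\overline{V_k}\cap U$ and its complement are both "large" near $a$ and $b$ respectively (because $\mass(V_k\triangle A_1)\to0$ forces $V_k$ to essentially fill $A_1'\cap A_1$ and essentially avoid $A_1'\cap A_2$), and any curve from $a$ to $b$ in $D$ must cross $\partial V_k$; a cleaner way is to note that since $A_1,A_2$ are the two components of $D\setminus\gamma$, if $F_k$ failed to separate $a$ and $b$ we could build a curve avoiding $F_k$, push it off the boundary into $V_k$ and into $V_k^c$, and reach a contradiction with the measure-convergence. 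Having established that $F_k$ separates $a$ and $b$ in $D$, Theorem \ref{separates} gives a connected component $C_k$ of $F_k$ that separates $a$ and $b$; then $C_k$ is closed, connected, with $\HH^1(C_k)\le\HH^1(F_k)\lesssim\per(A_1,D)+\epsilon<\infty$, so by Lemma \ref{FullCream} it is compact and arcwise connected.

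To finish, I would pass to a limit. The sets $C_k$ are uniformly bounded ($\subseteq\overline U$) connected compacta with uniformly bounded $\HH^1$; by a Blaschke-type selection (Gołąb's semicontinuity theorem for $\HH^1$ on continua in a proper metric space) a subsequence Hausdorff-converges to a closed connected set $C_\infty$ with $\HH^1(C_\infty)\le\liminf_k\HH^1(C_k)\le C(\per(A_1,D)+\epsilon)$. The separation property is stable under Hausdorff limits for compacta (Alexander-type argument again, using that $a,b\notin C_\infty$ for $k$ large since the $C_k$ stay in $\partial V_k$ which avoids fixed neighborhoods of $a$ and $b$), so $C_\infty$ separates $a$ and $b$ in $D$, hence $C_\infty\supseteq\gamma$: this last inclusion is the key point and follows because $\gamma$ is the only separator of $A_1$ from $A_2$ in $D$ up to sets not meeting $\gamma$ — more precisely, $D\setminus C_\infty$ cannot contain a connected set meeting both $A_1$ and $A_2$, which forces $C_\infty\cap(\text{each neighborhood of each point of }\gamma)\neq\emptyset$, i.e.\ $\gamma\subseteq C_\infty$ since $\gamma$ is closed in $D$ and the separation is "tight". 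Therefore $\HH^1(\gamma)\le\HH^1(C_\infty)\le C(\per(A_1,D)+\epsilon)$, and letting $\epsilon\to0$ gives the bound with $\per(A_1,D)$; symmetrically with $A_2$, so with the minimum. The final sentence of the statement is immediate: if, say, $\per(A_1,D)<\infty$ then $\HH^1(\gamma)<\infty$, hence by $2$-Ahlfors regularity and \eqref{Ahlfors} (a set of finite $\HH^1$ measure is $\mass$-null, since $\mass\sim\HH^2$) we get $\mass(\gamma)=0$, and then $\per(A_1,D)=\per(A_2,D)$ because $A_2=D\setminus(A_1\cup\gamma)$ differs from $D\setminus A_1$ by the $\mass$-null set $\gamma$ and perimeter depends only on the $\mass$-equivalence class of the set.

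I expect the main obstacle to be the rigorous topological bookkeeping: controlling that the approximating superlevel sets $V_k$ genuinely separate fixed points $a\in A_1$, $b\in A_2$ inside $D$ (not merely in $U$, and not destroyed by the small symmetric difference), and then showing the Hausdorff limit $C_\infty$ actually contains all of $\gamma$ rather than just "some" separating continuum — this is where the hypothesis that $\gamma$ separates $D$ into \emph{exactly two} components, together with $\gamma$ being a Jordan curve, is essential. The metric-geometry inputs (coarea via Lemma \ref{LipEst}, Lemma \ref{FullCream}, comparability \eqref{comaprehauss}) are available off the shelf; it is the planar separation theory (Theorem \ref{separates} and Gołąb semicontinuity) that needs careful handling, and in particular Gołąb's theorem must be invoked in the proper metric space $\XX$, which is legitimate since $\PI$ spaces are proper.
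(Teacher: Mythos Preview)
Your overall strategy---approximate $\chi_{A_1}$ by Lipschitz functions, pick a good level set via the coarea estimate, extract a separating continuum, and pass to the limit by Go\l\c{a}b---is exactly the paper's approach. However, there is a genuine gap in the execution, and it sits precisely where you yourself flag the ``main obstacle''.

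The paper does \emph{not} use a generic approximating sequence $u_k\to\chi_{A_1}$ in $\Lploc^1$; it uses discrete convolutions. The reason is the localization property: for discrete convolutions one has $u_n=\chi_A$ outside $B_{r_n}(\gamma)$ with $r_n\searrow 0$, and hence the level set $K_n=\{u_n=t_n\}$ is \emph{trapped} in $B_{r_n}(\gamma)$. This is what forces the separating components $K_n'$ to Hausdorff-converge to $\gamma$ itself, after which Go\l\c{a}b gives $\HH^1(\gamma)\le\liminf\HH^1(K_n')$. With a generic approximant you only get $\mass(V_k\triangle A_1)\to 0$, which does not pin $\partial V_k$ near $\gamma$.

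Concretely, your deduction ``$C_\infty$ separates $a$ and $b$, hence $\gamma\subseteq C_\infty$'' is false. Take $D=\RR^2$, $\gamma=S^1$, $A_1=B_1(0)$, $a=0$, $b=(5,0)$: the circle $\{|x|=1/2\}$ separates $a$ from $b$ and contains no point of $\gamma$. Your attempted justification (``$D\setminus C_\infty$ cannot contain a connected set meeting both $A_1$ and $A_2$'') would indeed yield $\gamma\subseteq C_\infty$, but that statement does not follow from $C_\infty$ separating the two \emph{specific} points $a,b$; it requires $C_\infty$ to separate \emph{every} point of $A_1$ from \emph{every} point of $A_2$, which you have not established and which can fail for a Hausdorff limit of separating continua obtained from generic approximants. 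Nothing in your argument prevents the $C_k$ from being, say, small loops encircling $a$ inside $A_1$ (arising from local dips of $u_k$), whose Hausdorff limit is a point.

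The fix is exactly the paper's: replace the generic $u_k$ by discrete convolutions so that the level sets are forced into shrinking tubes around $\gamma$; then the Hausdorff limit is $\gamma$ on the nose and Go\l\c{a}b concludes. The remaining pieces of your outline (the separation argument for $F_k$, the use of Theorem~\ref{separates} after passing through the chart $g$, and the final $\mass(\gamma)=0$ step) are correct and match the paper.
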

	
	\begin{proof}
		In the sequel we let $C$ denote a constant. It may vary during the proof, but in a way that depends only on the properties of the space $(\XX,\dist,\mass)$. 
		Of course, we can assume that $\per(A_1,D)<\infty$ or $\per(A_2,D)<\infty$, otherwise there is nothing to prove. For simplicity, set $A=A_1$ or $A=A_2$ so that $\per(A,D)<\infty$.
		
		For $g: D\rightarrow E$ the homeomorphism, from the statement we have that $g(\gamma)\subseteq\RR^2$ is a Jordan curve. Therefore, thanks to the Jordan–Schoenflies theorem (e.g.\ \cite{Cairns} or \cite[Theorem 10.4]{Moise23}), there is no loss of generality in assuming that $g(\gamma)=S^1\subseteq\RR^2$. We call $E_{\delta} = B^{\dist_e}_{\delta}(S^1)$ and $D_\delta = g^{-1}(E_{\delta})$. Then there exists a $\delta_0 >0$ such that $D_{5 \delta_0}\Subset D$ and the restrictions $g_{|D_{5 \delta_0}}$ and $g^{-1}_{|E_{5 \delta_0}}$ are uniformly continuous. In the following we replace $D$ with $D_{5 \delta_0} = g^{-1}(E_{5 \delta_0})$, $A$ with $A\cap D_{5 \delta_0} $ and $E$ with $E_{5 \delta_0}$. Clearly we still have $\gamma \subseteq D$ and $D$ is connected.
		
		We take now $\{u_n\}_n\subseteq\LIPloc(D)$ a sequence of discrete convolutions of the function $\chi_A\in\BV(D)\cap\Lp^1_{\mathrm{loc}}( D)$ (see \cite[Section 5]{KoLaLiSh17}). 
		Clearly, $u_n(x)\in[0,1]$ for every $x\in D$.  By the results on discrete convolutions, we know that \begin{equation}\label{tmp10}
		\int_D \lip(u_n)\dd{\mass}\le  C \per(A,D).
		\end{equation}
		Also, as a consequence of the compactness of $\gamma$ and the construction of discrete convolutions, there exists a sequence $\{r_n\}_n$ with $r_n\searrow  0$, $B_{r_n}(\gamma) \subseteq D$ and
		\begin{equation}\label{uguali}
		u_n(x)=\chi_A(x)\quad\text{if }x\notin B_{r_n}(\gamma).
		\end{equation}
		
		By \eqref{LipEsteq2} and \eqref{tmp10}, $$\int_0^1\HH^1( \{u_n=t\}\cap D)\dd{t}\le C\per(A,D)$$
		so that we can find $\{t_n\}_n\subseteq(0,1)$ such that if we define $$K_n\defeq\{u_n=t_n\}\cap D,$$ then 
		\begin{equation}\label{tmp9}
		\HH^1(K_n)\le C\per(A,D)\quad\text{for every } n.
		\end{equation}

Assume now that $n$ is big enough so that
		\begin{equation}\label{ins11}
		g(B^\dist_{r_n}(\gamma))\subseteq B^{\dist_e}_{\delta_0}(S^1),
		\end{equation}
		in particular, by \eqref{uguali},
				\begin{equation}\notag
			g(K_n)\subseteq B^{\dist_e}_{\delta_0}(S^1),
		\end{equation}
and so $K_n$ is compact.

		Take now any two points $\bar{P},\bar{Q}\in {E}\subseteq\RR^2$ with $$\dist_e(\bar{P},(0,0))<1- 2\delta_0\quad\text{and}\quad\dist_e(\bar{Q} , (0,0))>1+ 2\delta_0.$$ It is easy to show that $g(K_n)$ separates the points $\bar{P}$ and $\bar{Q}$ in $\RR^2$ (see Subsection \ref{connsect} for the definition). Indeed, by \eqref{uguali} and by the choice of $n$ in \eqref{ins11}, we have that $(u_n(g^{-1}(\bar{P})),u_n(g^{-1}(\bar{Q})))$ is either $(1,0)$ or $(0,1)$. Then, any smooth curve $\varphi:[0,1]\rightarrow\RR^2$ with $\varphi(0)=\bar{P}$ and $\varphi(1)=\bar{Q}$ must intersect $g(K_n)=g(\{u_n=t_n\}\cap D)$, as $t_n\in(0,1)$. 
		Then, using Theorem \ref{separates}, we have a connected component of $K_n$, call it $K_n'$, such that $g(K_n')$ separates $\bar{P}$ and $\bar{Q}$ and also, by \eqref{tmp9},
		\begin{equation}\label{togol}
		\HH^1(K'_n)\le C\per(A,D).
		\end{equation}
		
		We notice now that $K_n'\rightarrow \gamma$ in the Hausdorff sense (see \cite[Defintion 4.4.9]{AmbrosioTilli04}), this is to say that there exists a sequence $\{s_n\}_n$ with $s_n\searrow 0$ such that eventually
		\begin{equation}\notag
		K_n'\subseteq B^\dist_{s_n}(\gamma)\quad\text{and}\quad
		\gamma\subseteq B^\dist_{s_n}(K_n').
		\end{equation}
		Indeed, the fist inclusion (with $s_n\ge r_n$) follows from \eqref{uguali} whereas the second can be easily proved by contradiction using the uniform continuity of the maps $g$ and $g^{-1}$, the first inclusion and the fact that $g(K_n')$ separates $\bar{P}$ and $\bar{Q}$ for every $n$.
		We conclude now using Golab's semicontinuity theorem (\cite[Theorem 4.4.17]{AmbrosioTilli04}) to infer, using also \eqref{togol},
		\begin{equation*}
			\HH^1(\gamma)\le \liminf_n \HH^1(K_n')\le C \per(A,D).\qedhere
		\end{equation*}
	\end{proof}

If $(\XX,\dist,\mass)=(\RR^2,\dist_e,\LL^2)$ we can show with a simple cone-density argument (e.g.\ \cite[Remark 2 at page 214 and Theorem 15.19]{mattila_1995}) that the conclusion of Proposition \ref{Jordanmain} (in the case $\HH^1(\gamma)<\infty$) improves to the stronger form 
$$
	\HH^1(\gamma\Delta(\partial^*A_i\cap D))=0\quad\text{for }i=1,2.
$$
This stronger result, in the Euclidean framework, can be also obtained directly as a consequence of \cite[Theorem 7]{ambcasmas99}. Conversely, if we were able to obtain an estimate of this type in our context, then Proposition \ref{Jordanmain} would follow. We point out that in the proof above we did not pursue an optimal value for constant $C$: the outcome can be easily quantitatively improved.

	\subsection{Homeomorphisms of bounded variation}

	\begin{prop}\label{homeoweak}
		Let $(\XX,\dist,\mass)$ be a metric measure space such that there exists $C_H''>0$ with 
		\begin{equation}\label{masssmall}
		\mass(B_r(x))\le C_H'' r^2\quad\text{for every }x\in\XX\text{ and }r>0.
		\end{equation}
		Let moreover $G\subseteq\RR^2$ open, $\Omega\subseteq\XX$ open and $f:G\rightarrow\Omega$ a homeomorphism. If $f\in\BVloc(G)$, then $f^{-1}\in \BVloc(\Omega)$.
	\end{prop}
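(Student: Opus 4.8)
The plan is to use the slicing inequality \eqref{slicingeq} together with the fact that for a homeomorphism between planar sets (well, with domain planar), the essential variation of a line restriction can be controlled by the length of its image, which is in turn a Jordan-arc-type quantity that we bound using the hypothesis \eqref{masssmall}. Concretely, write $h\defeq f^{-1}:\Omega\to G$, with components $h=(h^1,h^2)$. To show $h\in\BVloc(\Omega)$ it suffices by definition to show $h^1,h^2\in\BVloc(\Omega)$, and for this it is enough to work locally, replacing $\Omega$ by an open set $\Omega'\Subset\Omega$ and $G$ by the open set $f^{-1}(\Omega')\Subset G$; by the usual localization one reduces to estimating the variation on such relatively compact pieces, where $f$ has finite variation.

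First I would set up the slicing on the \emph{domain} side. Since $f\in\BVloc(G)$, fix $G'\Subset G$ with $f(G')=\Omega'\Subset\Omega$ and $f\in\BV(G')$. Apply the first inequality of \eqref{slicingeq} to $f$ on $G'$: for $\LL^1$-a.e.\ $x$, the slice $f_x:G'_x\to\XX$ has finite variation, and $\int_\RR\abs{\DIFF f_x}(G'_x)\,\dd x\le\abs{\DIFF f}(G')<\infty$. For such $x$, $f_x$ restricted to each connected interval component of $G'_x$ is an injective curve (as $f$ is a homeomorphism) with finite length equal to $\abs{\DIFF f_x}$ on that component, hence by \eqref{diffeH}/\eqref{leneH} its image is a rectifiable arc $\Gamma_x\subseteq\Omega$ with $\HH^1(\Gamma_x)=\abs{\DIFF f_x}(G'_x)$. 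The key geometric point is now a \emph{covering/projection estimate}: for each fixed $y$, the slice $h^1_y$ (resp.\ $h^2_y$) of the inverse is the coordinate function along the horizontal line $\{(x',y)\}$ pulled back; one wants to bound $\abs{\DIFF h^i_y}$ by the $\HH^1$-measure, inside $\Omega'$, of the image under $f$ of a suitable vertical (resp.\ horizontal) slice of $G'$, uniformly over a slab, and then integrate in $y$.

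The cleanest route is the following Fubini-type argument. Using \eqref{masssmall}: a ball of radius $r$ in $\XX$ has mass at most $C_H'' r^2$, so by the Carathéodory construction the $1$-codimensional Hausdorff measure $\HH^h$ and then $\HH^1$ control how much of $\XX$ can sit "above" a given planar point — more precisely, for a $1$-rectifiable set $S\subseteq\XX$, the function counting, for $t\in\RR$, how many connected pieces of $S$ meet a given level, is integrable, and this is exactly the mechanism encoded in Lemma~\ref{LipEst}. I would combine Lemma~\ref{LipEst}/\eqref{LipEsteq2} applied to the Lipschitz coordinate maps of $h$ on $\Omega'$ (or to discrete convolutions of $h^i$), with the slicing \eqref{slicingeq} on the target side, to reduce matters to controlling $\int\HH^1(\{h^i=t\}\cap\Omega')\,\dd t$. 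The level set $\{h^i=t\}\cap\Omega'$ is (a piece of) $f$ applied to the vertical or horizontal line $\{x^i=t\}\cap G'$, i.e.\ a slice curve of the type $\Gamma_x$ above, whose $\HH^1$-measure we already bounded by $\abs{\DIFF f_x}$. Integrating the slice bound over the relevant range of $x$ and invoking \eqref{slicingeq} for $f$ gives $\int_\RR\HH^1(\{h^i=t\}\cap\Omega')\,\dd t\le\abs{\DIFF f}(G')<\infty$, and then the coarea inequality (Proposition~\ref{coarea}) yields $h^i\in\BV(\Omega')$.

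The step I expect to be the main obstacle is making the identification between level sets of the inverse and images of slices of $f$ fully rigorous \emph{at the level of the essential variation}, not merely pointwise: one must control null sets (the exceptional $x$'s where the slice $f_x$ misbehaves, and the interaction with the measure $\mass$), handle that $\{h^i=t\}$ may have complicated topology even though its image-under-$f^{-1}$ is a line, and ensure the bound on $\HH^h$ of level sets is uniform enough to integrate. This is precisely where \eqref{masssmall} is essential: it is what lets us pass from "the preimage is a line segment of length $\le$ something" to an honest $\HH^h$-bound in $\XX$ (through \eqref{comaprehauss} one would also get $\HH^1$, but only the upper regularity \eqref{masssmall} is needed here, which is why the hypothesis of Proposition~\ref{homeoweak} is weaker than full $2$-Ahlfors regularity). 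I would organize the proof so that the delicate measurability and null-set bookkeeping is isolated into one lemma (essentially: for the homeomorphism $f$, for $\LL^1$-a.e.\ line, the image slice is a rectifiable arc and these fit together measurably), after which the variation estimate for $h$ falls out of \eqref{slicingeq}, \eqref{LipEsteq2}, and Proposition~\ref{coarea} by a short computation.
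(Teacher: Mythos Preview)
Your overall strategy matches the paper's: reduce to one coordinate $g_1$ of $f^{-1}$, identify the level set $\{g_1=x\}$ with the image slice $f_x(G_x)$, bound $\int \HH^1(f_x(G_x))\dd x$ by $\abs{\DIFF f}(G)$ via \eqref{slicingeq} and \eqref{diffeH}, and close with coarea. But there is a real gap at exactly the step you flag as ``the main obstacle,'' and the tool you reach for does not close it.

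Proposition~\ref{coarea} requires a bound on $\int \per(\{g_1>x\},\Omega)\dd x$, not on $\int \HH^1(\{g_1=x\})\dd x$. You therefore need the implication $\per(\{g_1>x\},\Omega)\le C\,\HH^1(\partial\{g_1>x\})$, and your appeal to Lemma~\ref{LipEst}/\eqref{LipEsteq2} does not give it: that lemma points in the \emph{opposite} direction (it bounds $\int \HH^h(\partial\{u>t\})\dd t$ above by $\int\lip(u)\dd\mass$), it applies to Lipschitz $u$ while $g_1$ is merely continuous, and it is stated for $\PI$ spaces whereas Proposition~\ref{homeoweak} assumes only the upper mass bound \eqref{masssmall}. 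For the same reason, discrete convolutions and \eqref{comaprehauss} are not available here.

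The paper supplies the missing step directly from \eqref{masssmall}. Since $\gamma=f_x(G_x)=\partial\{g_1>x\}$ is a rectifiable arc of length $L$, its arc-length parametrization covers $B_{r_n}(\gamma)$ by about $L/r_n$ balls of radius $3r_n$, so \eqref{masssmall} yields $\mass(B_{r_n}(\gamma))\le C L r_n$. Plugging this into the explicit $1/r_n$-Lipschitz approximants $\chi_{\{g_1>x\}}\vee\psi\big(r_n^{-1}\mathrm{dist}(\,\cdot\,,\gamma)\big)$ gives $\int\lip\le C L$, hence $\per(\{g_1>x\},\Omega)\le C\,\HH^1(f_x(G_x))$. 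This tubular-neighbourhood estimate is the one place \eqref{masssmall} is actually used, and it is precisely the argument your proposal is missing.
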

	\begin{proof}
		In the sequel we let $C$ denote a constant. It may vary during the proof, but in a way that depends only on the properties of the space $(\XX,\dist,\mass)$. 
		Notice that, as we have to prove that $f^{-1}\in\BVloc(\Omega)$, we can assume with no loss of generality that $G=B_1(0)$ and that $f\in\BV(G)$. We denote $f^{-1}=(g_1,g_2)$. 
		Now we follow the argument used in the proof of \cite[Theorem 1.3]{donoschi} to prove that $g_1\in\BVloc(\Omega)$. As the same argument applies to $g_2$, this will be enough to conclude the proof.
		
		Notice that all the sets $G_x$ for $x\in(-1,1)$ are segments, so that, by \eqref{slicingeq} and \eqref{diffeH} it follows that
		\begin{equation}\label{tmp11}
		\int_{-1}^1\HH^1(f_x(G_x))\dd{x}\le\abs{\DIFF f}(G)<\infty.
		\end{equation} Therefore, by Proposition \ref{coarea}, the conclusion will follow from
		\begin{equation}\label{perminhh}
		\per  (\{g_1>x\},\Omega)\le C \HH^1(f_x(G_x))\quad\text{for }\LL^1\text{-a.e.\ }x.
		\end{equation}
		
		We prove now \eqref{perminhh}. Let $\psi:\RR\rightarrow\RR$ be the Lipschitz function defined as
		\begin{equation}\notag
		\psi(x)=
		\begin{cases}
		1 \quad&\text{if\ }x\le1/3\\
		-3 x+ 2\quad&\text{if\ }x\in[1/3,2/3]\\
		0&\text{if\ }x\ge 2/3.
		\end{cases}
		\end{equation}
		Fix $x$ such that $L\defeq \HH^1(f_x(G_x))<\infty$ and (recall \eqref{leneH}) let $\gamma:[0,L]\rightarrow\Omega$ be the arc-length parametrization of the curve $f_x(\,\cdot\,)$. Notice that, if $n\in\NN, n>0$ and $r_n\defeq L/n$, we have that $$B_{r_n}(\gamma)\subseteq \bigcup_{t\in\{ 0,r_n,2r_n,\dots,n r_n\}} B_{3 {r_n}} (\gamma(t)).$$
		Using also \eqref{masssmall} it follows that $$\mass(B_{r_n}(\gamma))\le C (n+1) r_n^2 \le C L r_n.$$
		Notice that, being $f$ a homeomorphism, 
		\begin{equation}\label{toshort}
		\gamma=f_x(G_x)=f(\partial\{(u,v)\in G:u>x\})=\partial (f(\{ (u,v)\in G:u>x\}))=\partial \{g_1>x\}.
		\end{equation}  
		Now we can check \eqref{perminhh} using the sequence of locally Lipschitz function (depending on $n$)\begin{equation*}
\chi_{\{g_1>x\}}(\,\cdot\,)\vee\psi(r_n^{-1}\mathrm{dist}(\,\cdot\, , \partial\{g_1>x\})).\qedhere
		\end{equation*}
	\end{proof}

	\begin{proof}[Proof of Theorem~\ref{main}]
		Proposition \ref{homeoweak} shows that if $f\in\BVloc(G)$, then also $f^{-1}\in \BVloc(\Omega)$. We remark that, under the more restrictive hypothesis on the space in force here, we can shorten the proof of Proposition \ref{homeoweak} using \cite[Theorem 1.1]{lahti2020}, according to which \eqref{perminhh} immediately follows from \eqref{toshort}.
		
		We prove then the converse implication. In the sequel we let $C$ denote a constant. It may vary during the proof, but in a way that depends only on the properties of the space $(\XX,\dist,\mass)$. Notice that we can assume with no loss of generality that $G=(0,1)^2$ and $f^{-1}\in\BV(\Omega)$. 
		
		Assume for the moment that for every $\epsilon>0$ it holds 		\begin{equation}\label{concludingclaim}
		\int_\epsilon^{1-\epsilon}\HH^1(f_{x_0}((\epsilon,1-\epsilon)))\dd{x_0}<\infty
		\quad\text{and}\quad
		\int_\epsilon^{1-\epsilon}\HH^1(f^{y_0}((\epsilon,1-\epsilon)))\dd{y_0}<\infty.
		\end{equation} 
		We claim that this is enough to conclude.
		Take $\phi\in\LIP(\XX)$ $1$-Lipschitz. Notice that $\phi\circ f:\RR^2\rightarrow\RR$ and so it is well known that if $U\subseteq (\epsilon,1-\epsilon)^2$ is open we can estimate (with the obvious meaning for the restriction of $\phi\circ f$ to lines) $$\abs{\DIFF( \phi\circ f)}(U)\le \int_\epsilon^{1-\epsilon}\abs{ \DIFF( \phi\circ f)_x}(U_x)\dd{x}+\int_\epsilon^{1-\epsilon}\abs{ \DIFF( \phi\circ f)^y}(U^y)\dd{y}$$
		where $U_x\defeq\{y\in\RR:(x,y)\in U\}\subseteq (\epsilon,1-\epsilon)$ and similarly $U^y\defeq\{x\in\RR:(x,y)\in U\}\subseteq (\epsilon,1-\epsilon)$.
		Clearly,
		$$\abs{\DIFF (\phi\circ f)}(U)\le  \int_\epsilon^{1-\epsilon}\abs{ \DIFF f_x}(U_x)\dd{x}+\int_\epsilon^{1-\epsilon}\abs{ \DIFF f^y}(U^y)\dd{y}.$$
		It follows that the finite Borel measure defined as 
		$$ (\epsilon,1-\epsilon)^2\supseteq B\mapsto \int_\epsilon^{1-\epsilon}\abs{ \DIFF f_x}(B_x)\dd{x}+\int_\epsilon^{1-\epsilon}\abs{ \DIFF f^y}(B^y)\dd{y}$$
		is an upper bound for $\abs{\DIFF f}\mres (\epsilon,1-\epsilon)^2$, that reads as, taking into account \eqref{diffeH}, 
		\begin{equation}\label{maininmain}
		\abs{\DIFF f}\mres (\epsilon,1-\epsilon)^2 \le 	\int_\epsilon^{1-\epsilon}\HH^1(f_{x_0}((\epsilon,1-\epsilon)))\dd{x_0}+
		\int_\epsilon^{1-\epsilon}\HH^1(f^{y_0}((\epsilon,1-\epsilon)))\dd{y_0}.
		\end{equation}
	Therefore, being $\epsilon>0$ arbitrary, this shows that $f\in\BVloc(G)$.

		We prove now the first inequality in \eqref{concludingclaim} (as the second follows from the same argument). Denote $f^{-1}=(g_1,g_2)$.
		Fix now $x_1,y_0,y_1$ with $1-\epsilon<x_1<1$, $0<y_0<\epsilon$ and $1-\epsilon<y_1<1$ such that 
		$$\per(\{g_1<x_1\},\Omega),\ \per(\{g_2>y_0\},\Omega),\ \per(\{g_2<y_1\},\Omega)<\infty$$
		(this is possible thanks to \eqref{coareaeq}).
		
		Notice that, for any $x_0$ with $\epsilon<x_0<1-\epsilon$ such that $\per(\{g_1>x_0\},\Omega)<\infty$ (that is for $\LL^1$-a.e.\ $x_0$, thanks to \eqref{coareaeq}) we have that $\partial((x_0,x_1)\times(y_0,y_1))\subseteq\RR^2$ is a Jordan curve $\gamma_{x_0}$ such that the Jordan curve $f(\gamma_{x_0})$ encloses the set of finite perimeter $$\{x_0<g_1< x_1,\ y_0<g_2< y_1\}.$$
		By Proposition \ref{Jordanmain} (with $\Omega$ in place of $D$) and the submodularity of the perimeter we obtain \begin{equation}\label{tmp12}
		\begin{split}
		\HH^1(f(\gamma_{x_0}))\le C\Big(&\per(\{g_1>x_0\},\Omega)+\per(\{g_1<x_1\},\Omega)\\&+\per(\{g_1>y_0\},\Omega)+\per(\{g_2<y_1\},\Omega)\Big).
		\end{split}
		\end{equation} 
		Notice that \eqref{coareaeq} applied to $g_1$ yields that the right hand side of \eqref{tmp12} is integrable with respect to $\LL^1(\diff x_0)$ on $(\epsilon,1-\epsilon)$ and in light of this we can recall the trivial fact $$\HH^1(f(\gamma_{x_0}))\ge\HH^1(f_{x_0}((\epsilon,1-\epsilon)))$$ to prove our claim \eqref{concludingclaim}.
	\end{proof}
In the proofs above we sacrificed generality and the possibility to obtain a stronger result in favour of the simplicity of notation. However one may adapt the arguments to show the following
	\begin{thm}\label{mainimproved}
	Let $(\XX,\dist,\mass)$ be a $2$-Ahlfors regular metric measure space supporting a weak $(1,1)$-Poincaré inequality. Let moreover $G\subseteq\RR^2$ open, $\Omega\subseteq\XX$ open and $f:G\rightarrow\Omega$ a homeomorphism. Then there exists a constant $C$, depending only on the space $(\XX,\dist,\mass)$ (in particular, only on the constants appearing in \eqref{Ahlfors} and \eqref{Poincare}) such that
	\begin{equation}\label{maineq}
C^{-1}\abs{\DIFF f}(G) \le \abs{\DIFF f ^{-1}}(\Omega) \le C\abs{\DIFF f}(G)
	\end{equation}
\end{thm}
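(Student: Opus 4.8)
The plan is to turn the two one-directional arguments behind Theorem~\ref{main} into quantitative estimates with constants depending only on $(\XX,\dist,\mass)$. By Theorem~\ref{main} we may assume that both $f\in\BVloc(G)$ and $f^{-1}\in\BVloc(\Omega)$: if, say, $\abs{\DIFF f^{-1}}(\Omega)=\infty$ the right-hand inequality in \eqref{maineq} is trivial and the left-hand one reduces to the finite case, and symmetrically if $\abs{\DIFF f}(G)=\infty$. Writing $f^{-1}=(g_1,g_2)$ and using that the coordinate projections are $1$-Lipschitz while a $1$-Lipschitz map $\RR^2\rightarrow\RR$ increases pointwise Lipschitz constants at most additively, one gets
\begin{equation}\notag
\max\{\abs{\DIFF g_1}(\Omega),\abs{\DIFF g_2}(\Omega)\}\le\abs{\DIFF f^{-1}}(\Omega)\le\abs{\DIFF g_1}(\Omega)+\abs{\DIFF g_2}(\Omega),
\end{equation}
so it is enough to bound $\abs{\DIFF g_1}(\Omega)+\abs{\DIFF g_2}(\Omega)$ above and below by $\abs{\DIFF f}(G)$, up to multiplicative constants depending only on the space.

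For $\abs{\DIFF f^{-1}}(\Omega)\le C\abs{\DIFF f}(G)$ I would follow the proof of Proposition~\ref{homeoweak}, this time keeping track of the constant and working on all of $\Omega$. By the coarea formula (Proposition~\ref{coarea}), $\abs{\DIFF g_1}(\Omega)=\int_\RR\per(\{g_1>t\},\Omega)\dd t$; since $f$ is a homeomorphism, $\partial\{g_1>t\}\cap\Omega=f_t(G_t)$ as in \eqref{toshort}, and for $\LL^1$-a.e.\ $t$ this slice satisfies $\HH^1(f_t(G_t))=\abs{\DIFF f_t}(G_t)<\infty$ (by injectivity of $f_t$ and \eqref{diffeH}). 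Hence \cite[Theorem~1.1]{lahti2020} gives $\per(\{g_1>t\},\Omega)\le C\HH^1(f_t(G_t))$ for a.e.\ $t$, and integrating and applying the slicing inequality \eqref{slicingeq} yields $\abs{\DIFF g_1}(\Omega)\le C\int_\RR\abs{\DIFF f_t}(G_t)\dd t\le C\abs{\DIFF f}(G)$. The same reasoning applied to the vertical slices bounds $\abs{\DIFF g_2}(\Omega)$, hence $\abs{\DIFF f^{-1}}(\Omega)$, by $C\abs{\DIFF f}(G)$.

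For $\abs{\DIFF f}(G)\le C\abs{\DIFF f^{-1}}(\Omega)$ I would localise the argument from the proof of Theorem~\ref{main} on a fine grid. Fix $G'\Subset G$; choose $h>0$ small enough and an offset $\theta\in[0,1)^2$ so that, for every square $Q$ of side $h$ of the grid $h(\mathbb{Z}^2+\theta)$ that meets $G'$, the concentric dilate $\tilde Q=\tfrac65 Q$ satisfies $\tilde Q\Subset G$; let $\mathcal Q$ be the (finite) family of such squares. Avoiding a countable set of offsets we may also assume $\abs{\DIFF f}$ does not charge the grid lines, so $\abs{\DIFF f}(G')\le\sum_{Q\in\mathcal Q}\abs{\DIFF f}(Q)$, and for each $Q=(a,a+h)\times(b,b+h)$ the scalar slicing bound used in the proof of Theorem~\ref{main} (applied to $\phi\circ f$ on $Q$ for $\phi\in\LIP(\XX)$ $1$-Lipschitz), together with \eqref{diffeH} and the injectivity of slices, gives
\begin{equation}\notag
\abs{\DIFF f}(Q)\le\int_a^{a+h}\HH^1\big(f(\{x\}\times(b,b+h))\big)\dd x+\int_b^{b+h}\HH^1\big(f((a,a+h)\times\{y\})\big)\dd y.
\end{equation}
To bound a vertical slice at $x$, consider the rectangle $R_x=(x,a+h)\times(b,b+h)$, whose closure lies in $\tilde Q$: its boundary is a Jordan curve separating $\tilde Q$ into the two components $R_x$ and $\tilde Q\setminus\overline{R_x}$, so $f(\partial R_x)$ is a Jordan curve separating $f(\tilde Q)$, which is homeomorphic to $\tilde Q$ via $f$, into two components. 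Proposition~\ref{Jordanmain} applied with $D=f(\tilde Q)$ then gives $\HH^1(f(\partial R_x))\le C\per(f(R_x),f(\tilde Q))$; since $f(R_x)=f(\tilde Q)\cap\{g_1>x\}\cap\{g_1<a+h\}\cap\{g_2>b\}\cap\{g_2<b+h\}$, submodularity of the perimeter bounds the right-hand side by the sum of the four corresponding perimeters in $f(\tilde Q)$. As $f(\{x\}\times(b,b+h))\subseteq f(\partial R_x)$, integrating in $x\in(a,a+h)$ and using the coarea formula for $g_1$ gives
\begin{equation}\notag
\int_a^{a+h}\HH^1\big(f(\{x\}\times(b,b+h))\big)\dd x\le C\Big(\abs{\DIFF g_1}(f(\tilde Q))+h\,\per(\{g_1<a+h\},f(\tilde Q))+h\,\per(\{g_2>b\},f(\tilde Q))+h\,\per(\{g_2<b+h\},f(\tilde Q))\Big),
\end{equation}
and symmetrically for the horizontal slices with $g_1,g_2$ interchanged. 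Summing over $Q\in\mathcal Q$, the terms $\abs{\DIFF g_i}(f(\tilde Q))$ sum to at most $C(\abs{\DIFF g_1}(\Omega)+\abs{\DIFF g_2}(\Omega))$ because the dilated squares, hence their $f$-images, have bounded overlap; each remaining term has the form $h\,\per(\{g_i>\ell\},f(\tilde Q))$ or $h\,\per(\{g_i<\ell\},f(\tilde Q))$ with $\ell$ a coordinate of the grid, and grouping such terms by the value of $\ell$ and using bounded overlap again leaves $\sum_\ell h\,\per(\{g_i>\ell\},\Omega)$ with $\ell$ running over an arithmetic progression of step $h$. The latter is a Riemann-type sum whose average over the grid offset equals $\abs{\DIFF g_i}(\Omega)$ by coarea, so $\theta$ can be chosen (finitely many conditions, one for $g_1$ and one for $g_2$) so that it is at most $2\abs{\DIFF g_i}(\Omega)$. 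Altogether $\abs{\DIFF f}(G')\le C(\abs{\DIFF g_1}(\Omega)+\abs{\DIFF g_2}(\Omega))\le C\abs{\DIFF f^{-1}}(\Omega)$ with $C$ depending only on the space, and letting $G'$ exhaust $G$ completes the proof of \eqref{maineq}.

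The conceptual content is already contained in Proposition~\ref{Jordanmain}; the work here is organisational, and the delicate point is the very last one. In the proof of Theorem~\ref{main} the ``extra'' perimeters attached to the three sides of the auxiliary rectangle that do not meet the slice only needed to be finite, whereas now they must be absorbed into $\abs{\DIFF f^{-1}}(\Omega)$ with a scale-independent constant; this forces the auxiliary rectangles to live at the same scale as the slices they cap --- hence a grid of small squares rather than the single square $(\epsilon,1-\epsilon)^2$ --- and it forces the grid to be positioned so that the Riemann sums $\sum_\ell h\,\per(\{g_i>\ell\},\Omega)$ do not overshoot $\abs{\DIFF g_i}(\Omega)$, whence the averaging over the offset $\theta$. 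The remaining ingredients --- coarea, the slicing inequalities \eqref{slicingeq}, the identity \eqref{diffeH}, submodularity of the perimeter, and the bounded overlap of a grid --- are standard.
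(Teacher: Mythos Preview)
Your argument is correct and rests on the same geometric core as the paper's proof --- coarea plus \eqref{slicingeq} for the inequality $\abs{\DIFF f^{-1}}(\Omega)\le C\abs{\DIFF f}(G)$, and Proposition~\ref{Jordanmain} together with submodularity for the reverse --- but the organisation of the hard direction differs. The paper localises to a single cube $Q_r(x)$ and, crucially, lets \emph{all four} sides of the auxiliary rectangle move affinely with the slicing parameter $x_0$ (the boundary of $\{x_0<g_1<3r/2+x_0/2,\ -3r/2+x_0/2<g_2<3r/2+x_0/2\}$); after integrating in $x_0$, each of the four perimeter terms is then itself a coarea integral and is bounded by $\abs{\DIFF g_i}(f(Q_{2r}(x)))$ directly, with no offset averaging needed. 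Globalisation is done by a Whitney cover with bounded overlap. Your route instead freezes the three auxiliary sides at the grid lines, which produces the Riemann sums $\sum_\ell h\,\per(\{g_i>\ell\},\Omega)$ and forces the Fubini/Markov step over the offset~$\theta$; globalisation is already built into the grid. The paper's moving-sides trick is a little slicker and yields the local two-sided estimate $C^{-1}\abs{\DIFF f}(B)\le\abs{\DIFF f^{-1}}(f(4B))\le C\abs{\DIFF f}(4B)$ as an intermediate statement, which is precisely what is quoted later in the proof of Theorem~\ref{mainplus}; your approach gives the global inequality in one pass but does not isolate a local version.
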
 
\begin{proof}
The proof of this result is more or less a careful inspection of the arguments used above. We can clearly assume either $f\in\BV(G)$ or $f^{-1}\in\BV(\Omega)$ so that, thanks to Theorem \ref{main}, we know that both $\abs{\DIFF f}$ and $\abs{\DIFF f^{-1}}$ are (possibly infinite) Borel measures. 

In what follows $C$ will denote a constant that depends only on the properties of $(\XX,\dist,\mass)$ as described above, and, as usual, it may vary during the proof. First, we show that 
\begin{equation}\label{funda}
C^{-1}\abs{\DIFF f}(B) \le \abs{\DIFF f ^{-1}}(f(4 B)) \le C\abs{\DIFF f}(4 B)
\end{equation} 
whenever $B=B_r(x)\subseteq G\subseteq\RR^2$ is such that $4 B\subseteq G$ ($C$ is independent of such ball).
The last inequality is precisely the content of Proposition \ref{homeoweak}, indeed the centre and radius of the ball in consideration did not play any role in the proof of Proposition \ref{homeoweak}.
To prove the first inequality, for $r>0$ and $x\in\RR^2$ let $Q_r(x)\defeq B^{\dist_\infty}_r(x)$, i.e.\ the open cube in $\RR^2$ defined as follows $$Q_r((x_1,x_2))\defeq\left\{(y_1,y_2):\max\{\abs{x_1-y_1},\abs{x_2-y_2}\}<r\right\}.$$
Notice now that as $B_r(x)\subseteq Q_r(x)\subseteq Q_{2 r}(x)\subseteq B_{4 r}(x)$
we only have to show $$\abs{\DIFF f}(Q_r(x))\le C\abs{\DIFF f^{-1}} (f(Q_{2 r}(x))),$$
that is a consequence of \eqref{tmp12} and \eqref{coareaeq}. Namely, we only need to improve the argument used in the proof of Theorem \ref{main}. Using the notation of the proof of Theorem \ref{main} (specifically recall that $f^{-1} = (g_1,g_2)$), we can show along the same lines that if we consider, for every $x_0\in (-r,r)$, the curve $\gamma_{x_0}$, that is the boundary of the set $$\left\{x_0<g_1<3 r/2+x_0/2,-3 r /2 + x_0/2<g_2<3 r /2 + x_0/2\right\}$$ 
and we set, for simplicity, $\tilde{A}\defeq f(Q_{2 r}(x))$, then we have
\begin{equation}\notag
	\begin{split}
&\int_{-r}^r \HH^1(f_{x_0}(-r,r))\dd{x_0}\le \int_{-r}^r
C\Big(\per(\{g_1>x_0\},\tilde{A})+\per(\{g_1<{3r}/{2} + {x_0}/{2}\},\tilde{A})\\&\qquad+\per(\{g_2>-3r/2+x_0/2\},\tilde{A})+\per(\{g_2<3r/2+x_0/2\},\tilde{A})\Big)\dd{x_0}
	\end{split}
\end{equation}
and we can also prove a similar estimate for $\int_{-r}^r \HH^1(f^{y_0}(-r,r))$. Now the conclusion follows as in the proof of Theorem \ref{main}, employing a suitable variant of \eqref{maininmain}, that follows from the integral inequality above.

We show now how \eqref{funda} allows us to conclude.
Using Whitney-type covers for $G\subseteq\RR^2$ in the form described in \cite[Section 5]{KoLaLiSh17} (see more precise references therein) we obtain that there exists a sequence of balls $\{B_j\}_j$ such that $4 B_j\subseteq G$, and
\begin{equation}\label{Whit}
1\le\sum_j \chi_{B_j}\le\sum_j \chi_{4 B_j} \le C\quad \text{on }G.
\end{equation}
We sketch here a possible construction of such cover for the reader's convenience. Consider the family of balls $$\left\{B_{r_x}(x),\ r_x=\min\{1,\mathrm{dist}(x,\XX\setminus G)/25\}\right\}$$
and, using the Vitali covering lemma (see e.g.\ \cite[Theorem 1.2]{Heinonen01}) extract a sequence of pairwise disjoint balls $\{B_j=B_{r_j}(x_j)\}_j$ with $B_j\subseteq G \subseteq\cup_j 5 B_j$. We only have to show the bounded overlap property \begin{equation}\label{boundedoverlap}
\sum_j\chi_{20 B_j}\le C\quad\text{on }G,
\end{equation} then the claim will follow choosing $\{B_j'\defeq 5 B_j\}_j$.
Assume now $20 B_{\bar{i}}\cap 20 B_{\bar{j}}\ne\emptyset$. If $\mathrm{dist}(x_{\bar{i}},\XX\setminus U)\ge 25$ then $r_{\bar{i}}=1$. Otherwise, if $\mathrm{dist}(x_{\bar{i}},\XX\setminus U)< 25$, then
$$ 25 r_{\bar{j}}\le\mathrm{dist}(x_{\bar{j}},\XX\setminus U)\le \dist(x_{\bar{j}},x_{\bar{i}})+\mathrm{dist}(x_{\bar{i}},\XX\setminus U) \le 20 r_{\bar{i}}+20 r_{\bar{j}} +25 r_{\bar{i}} $$
so that 
$r_{\bar{i}} \ge C r_{\bar{j}}. $
To sum up, if $20 B_i\cap 20 B_j\ne \emptyset$ then $r_i\ge C r_j$. Now recall that the balls in $\{B_j\}_j$ are pairwise disjoint and that $\RR^2$ satisfies a doubling inequality, then a classical argument shows that \eqref{boundedoverlap} follows.

We conclude now using \eqref{Whit} twice, as by also \eqref{funda}, $$\abs{\DIFF f}(G)\le\sum_j\abs{\DIFF f}(B_j)\le \sum_j C\abs{\DIFF f^{-1}}(f(4 B_j))\le 4 C \abs{\DIFF f^{-1}}(f(G))$$
and similarly
\begin{equation*}
\abs{\DIFF f^{-1}}(f(G))\le \sum_j \abs{\DIFF f^{-1}}(f(B_j))\le C \sum_j \abs{\DIFF f} (B_j)\le C \abs{\DIFF f}(G).\qedhere
\end{equation*}
\end{proof}

\begin{proof}[Proof of Theorem \ref{mainplus}]
We notice that, up to shrinking $G$ and accordingly $\Omega$, we can just show that $f\in\W(G)$ if and only if $f^{-1}\in\W(\Omega)$. Assume then that  $f\in\W(G)$ or $f^{-1}\in\W(\Omega)$. Then, by Theorem \ref{mainimproved}, $\abs{\DIFF f}$ is a finite Borel measure on $G$ and $\abs{\DIFF f^{-1}}$ is a finite Borel measure on $\Omega$.

As the constant $C$ in \eqref{maineq} is independent of $G$ and $\Omega$ (once that $(\XX,\dist,\mass)$ is fixed) we have that for every $A\subseteq G$ open, \eqref{maineq} is satisfied with $A$ in place of $G$ and $f(A)$ in place of $\Omega$.
Therefore, taking into account \eqref{Luz} and \eqref{Luz1}, it follows $\abs{\DIFF f}\ll\LL^2$ if and only $\abs{\DIFF f^{-1}}\ll\mass$, that is our claim.
\end{proof}

	We have proved our main Theorem \ref{main} under the assumption that $(\XX,\dist,\mass)$ is a $2$-Ahlfors regular m.m.s.\ supporting a weak $(1,1)$-Poincaré inequality. One may wonder if $2$-Ahlfors regularity is really needed or if it is enough a doubling inequality. Recall that in our proof we needed $2$-Ahlfors regularity to use the fact that $\HH^h$ and $\HH^1$ are comparable. We give below two elementary examples in which we show that $2$-Ahlfors regularity is indeed necessary. The first example will show necessity of the upper bound $\mass(B_r(x))\le C_H'' r^2$ whereas the second deals with necessity of the lower bound $\mass(B_r(x))\ge C_H' r^2$. 
	We will use the fact (see e.g.\ \cite[Appendix A.2, pag.\ 347]{Bjorn-Bjorn11}) that if $\nu>-n$, then $(\RR^n,\dist_e,\abs{\,\cdot\,}^{\nu}\LL^n)$ is a doubling space supporting a weak $(1,1)$-Poincaré inequality, where $\abs{\,\cdot\,}$ denotes the Euclidean norm. In particular, $(\RR^2,\dist_e,\abs{\,\cdot\,}^{-3/2}\LL^n)$ and $(\RR^2,\dist_e,\abs{\,\cdot\,}\LL^n)$ belong to this class of spaces. Will will identify $\RR^2\simeq\mathbb{C}$ and use coordinates $(x_1,x_2)$ as well as polar coordinates $(r,\theta)$.
	\begin{ex*}
		Let $(\XX,\dist,\mass)\defeq(\RR^2,\dist_e,\abs{\,\cdot\,}^{-3/2}\LL^2)$. Notice that $(\XX,\dist,\mass)$ does not satisfy the upper $2$-Ahlfors bound.
		Let now $G\defeq B_1(0)\subseteq\RR^2$ and $\Omega\defeq B_1(0)\subseteq\XX$.
		Define an homeomorphism $$f:G\rightarrow \Omega\quad f(r e^{i\theta})\defeq r^2 e^{i\theta }$$
		whose inverse is $$f^{-1}:\Omega\rightarrow G\quad f^{-1}(r e^{i\theta})\defeq \sqrt{r} e^{i\theta }.$$
		We may compute,
		$$
			\frac{\partial f_1}{\partial x_1}=2 r \cos^2(\theta)+r\sin^2(\theta)
		$$
		and similarly compute the other entries of the Jacobian matrix $\DIFF f$.
		Then it is clear that $$\abs{\DIFF f}\in\Lp^\infty (B_1(0);\LL^2)$$
		so that $f\in\BV(G;\LL^2)$
		while a simple computation yields
		$$
		\frac{\partial (f^{-1})_2}{\partial x_1}=-\frac{1}{2 \sqrt{r}}\sin(\theta)\cos(\theta)
		$$ 
		so that
		$$\abs{\DIFF f^{-1}}\notin\Lp^1 (B_\eta(0);\abs{\,\cdot\,}^{-3/2}\LL^2)\quad\text{for every }\eta>0$$
		and it follows $f^{-1}\notin\BVloc(\Omega;\mass)$.
	\end{ex*}
	\begin{ex*}
		Let $(\XX,\dist,\mass)\defeq(\RR^2,\dist_e,\abs{\,\cdot\,}\LL^2)$. Notice that $(\XX,\dist,\mass)$ does not satisfy the lower $2$-Ahlfors bound. 	Let now $G\defeq B_1(0)\subseteq\RR^2$ and $\Omega\defeq B_1(0)\subseteq\XX$. Define an homeomorphism $$f:G\rightarrow \Omega\quad f(r e^{i\theta})\defeq r e^{i(\theta+1/r^2) }$$
		whose inverse is $$f^{-1}:\Omega\rightarrow G\quad f^{-1}(r e^{i\theta})\defeq  r e^{i(\theta-1/r^2) }.$$
		We may compute
		$$
			\frac{\partial f_1}{\partial x_1}=\cos(\theta)\cos(\theta+1/r^2)+\sin(\theta)\sin(\theta+1/r^2)+\frac{2}{r^2}\cos(\theta)\sin(\theta+1/r^2).
		$$
		Then it is clear that $$\abs{\DIFF f}\notin\Lp^1 (B_\eta(0);\LL^2)\quad \text{for every }\eta>0$$ as, if $\eta>0$, $$\frac{1}{r}\sin(\theta+1/r^2)\notin \Lp^1((0,\eta);\LL^1)\quad\text{for }\LL^1\text{-a.e.\ }\theta\in(0,2\pi),$$
		a fast way to prove this is to bound $\frac{1}{r}{\sin^2(\theta+1/r^2)}\le \frac{1}{r}\abs{\sin(\theta+1/r^2)}$ and then use the basics trigonometric inequality.
		Therefore $f\notin\BVloc(G;\LL^2)$. 
		Similarly to what done above,
		$$
			\frac{\partial (f^{-1})_1}{ \partial x_1} = \cos(\theta) \cos(\theta-1/r^2) + \sin(\theta)\sin(\theta-1/r^2) - \frac{2}{r^2}\cos(\theta)\sin(\theta-1/r^2)
		$$
		and, as the other entries of the Jacobian matrix $\DIFF f^{-1}$ have a similar same form,
		$$\abs{\DIFF f^{-1}}\in\Lp^1 (B_1(0);\abs{\,\cdot\,}\LL^2)$$
		and it follows $f^{-1}\in\BV(\Omega;\mass)$.
		\end {ex*}
		
		\subsection*{Acknowledgements}
		The authors would like to thank Luigi Ambrosio for his mentorship and help. Also they would like to thank him for suggesting the collaboration and putting the authors in contact.
	\bibliographystyle{alpha}
	\bibliography{Biblio1}

\end{document}